\documentclass[11pt]{amsart}
\usepackage[margin=1in]{geometry}
\usepackage{amsmath,amssymb,mathtools}
\usepackage[T1]{fontenc}
\usepackage{lmodern}
\usepackage{microtype}
\usepackage{hyperref}
\hypersetup{colorlinks=true,linkcolor=blue,citecolor=blue,urlcolor=blue}

\newtheorem{theorem}{Theorem}
\newtheorem{lemma}[theorem]{Lemma}
\newtheorem{proposition}[theorem]{Proposition}
\newtheorem{corollary}[theorem]{Corollary}
\newtheorem{conjecture}[theorem]{Conjecture}

\newcommand{\F}{\mathcal{F}}
\newcommand{\G}{\mathcal{G}}
\newcommand{\A}{\mathcal{A}}
\newcommand{\B}{\mathcal{B}}

\title{An Intersection-Weighted Erd\H{o}s--Ko--Rado Theorem}
\author{Casey Tompkins}
\email{casey.tompkins@renyi.hu}
\date{}

\begin{document}

\begin{abstract}
We consider an Erd\H{o}s--Ko--Rado type sum that weights each member of a uniform family according to its smallest intersection with the rest of the family. We prove that once the ground set is sufficiently large this sum is at most one, with equality exactly for stars. This simultaneously generalizes the usual Erd\H{o}s--Ko--Rado theorem for every intersection threshold $t$ and $n$ sufficiently large.
As a consequence we also obtain an extension of Hilton's theorem on cross-intersecting families.
\end{abstract}

\maketitle

\section{Introduction}

A family $\A\subseteq \binom{[n]}{k}$ is \emph{$t$-intersecting} if
$|A\cap B|\ge t$ for all $A,B\in \A$. The basic example is a $t$-\emph{star}: the
family of all $k$-sets containing a fixed $t$-set, which has size
$\binom{n-t}{k-t}$. The theorem of Erd\H{o}s, Ko, and Rado \cite{EKR} states that when $t=1$ and $n\ge 2k$, every intersecting
$\A\subseteq \binom{[n]}{k}$ has size at most $\binom{n-1}{k-1}$, with
equality only for $1$-stars when $n>2k$. The same paper also proved that, for
fixed $k$ and $t$, the $\binom{n-t}{k-t}$ bound holds once $n$ is sufficiently large.

The best possible lower bound on $n$ ensuring that a $k$-uniform $t$-intersecting family has size at most $\binom{n-t}{k-t}$ was obtained by Frankl and Wilson. Frankl~\cite{Frankl} proved that the
threshold $n\ge (t+1)(k-t+1)$ is sufficient when $t \ge 15$, and Wilson~\cite{Wilson} proved that it is sufficient for all $t$.  

The full extremal problem for  $k$-uniform $t$-intersecting families for all values
of $n,k,t$ was later solved by Ahlswede and Khachatrian~\cite{AK}. Their
complete intersection theorem shows that the extremal examples are, up to
isomorphism, families of the form
\[
\{A\in \binom{[n]}{k}: |A\cap [t+2r]|\ge t+r\}.
\]
The case $r=0$ is the $t$-star, while the case $r=1$ is the family that appears
below in the sharpness examples.

For a family $\F\subseteq \binom{[n]}{k}$ and $A\in \F$, write
\[
i_{\F}(A):=\min_{B\in \F}|A\cap B|.
\]
We consider strengthening the Erd\H{o}s--Ko--Rado theorem by showing that for sufficiently large $n$ the sum
\[
\sum_{A\in \F}\frac{1}{\binom{n-i_{\F}(A)}{k-i_{\F}(A)}}.
\]
is bounded above by one. 

Our first result gives a short proof that a cubic size condition on the ground-set is enough.

\begin{theorem}\label{thm:cubic}
Let $k\ge 1$, let $\F\subseteq \binom{[n]}{k}$, and assume that
$n\ge (k^3+2k^2+k)/2$. Then
\[
\sum_{A\in \F}\frac{1}{\binom{n-i_{\F}(A)}{k-i_{\F}(A)}}\le 1.
\]
Moreover, equality holds if and only if $\F$ is a $c$-star for some $0\le c\le k$.
\end{theorem}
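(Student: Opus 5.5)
The plan is to use the level structure of $\F$. Write $w_j:=1/\binom{n-j}{k-j}$, which increases in $j$. Let $c:=\min_{A\in\F} i_{\F}(A)$, and for $j\ge c$ let $\G_j:=\{A\in\F: i_{\F}(A)\ge j\}$.

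The first observation is that each $\G_j$ is $j$-intersecting. Indeed, if $A,B\in\G_j$ then $|A\cap B|\ge i_{\F}(A)\ge j$. Since $n\ge (k^3+2k^2+k)/2\ge (j+1)(k-j+1)$ for all $0\le j\le k$, the Frankl--Wilson theorem gives $|\G_j|\le \binom{n-j}{k-j}$, with equality only for $j$-stars. By Abel summation,
\[
\sum_{A\in\F} w_{i_{\F}(A)} \;=\; |\G_c|\,w_c+\sum_{j>c}|\G_j|\,(w_j-w_{j-1}).
\]
If $\G_j=\emptyset$ for all $j>c$, the bound $|\G_c|w_c\le 1$ finishes the proof, and equality forces $\F$ to be a $c$-star. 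Conversely, a $c$-star $\F$ has $i_{\F}\equiv c$ on all of $\F$ (for $c<k$ and $n\ge 2k-c$), so it attains equality.

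The remaining work is the case where some $\G_j$ with $j>c$ is nonempty. The naive bound $|\G_j|\le 1/w_j$ loses too much here, so I would exploit the fact that the levels constrain each other. Let $m:=\max_{A\in\F} i_{\F}(A)>c$ and fix $A_0$ with $i_{\F}(A_0)=m$. Then every $B\in\F$ satisfies $|B\cap A_0|\ge m$, so $\F\subseteq\{B:|B\cap A_0|\ge m\}$.

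I would split the estimate into two parts. First, $|\F|$ is controlled by the lower levels through $c$-intersection. Second, $|\G_j|$ for $j>c$ is controlled by the containment in $\{B:|B\cap A_0|\ge m\}$. In addition I would use a $t$-intersecting Hilton--Milner stability bound (or a direct kernel argument): a $j$-intersecting family not contained in a $j$-star has size at most about $k(k-j)\binom{n-j-1}{k-j-1}$, hence weighted mass $O(k^2/n)$. So for each $j$, either $\G_j$ lies in a $j$-star with kernel $T_j$, or $\G_j$ contributes $O(k^2/n)$. In the star case, the kernels are nested, $T_c\subseteq T_j$ for the relevant $j$, because $A_0$ and the sets of $\G_j$ all lie in the high levels.

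The deficiency of $\G_c$ relative to a full $c$-star is then at least the number of $k$-sets through $T_c$ meeting $A_0$ in fewer than $m$ points. Its weighted size is $1-O(k^2/n)$. This deficiency should pay for the gain $|\G_j|(w_j-w_{j-1})\le 1-w_{j-1}/w_j=\frac{k-j+1}{n-j+1}$ summed over $j>c$.

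The main obstacle is this final bookkeeping: showing that the deficit forced in the lower levels strictly exceeds the surplus in the higher levels. Summing errors of order $k\cdot\binom{k}{2}/n$ across the at most $k$ levels is exactly where I expect the hypothesis $n\ge k(k+1)^2/2$ to enter. I would first verify that inequality carefully for $m=c+1$, which should be the tightest case, and then handle larger $m$ by the same counting with additional room. Strictness of the inequality in this case shows that equality occurs only when $\G_j=\emptyset$ for all $j>c$ and $\G_c$ is a full $c$-star, which is the claimed characterization.
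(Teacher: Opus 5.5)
The gap is in the case where some level above $c$ is nonempty. Your Abel summation matches the paper's telescoping identity (Lemma~\ref{lem:tel}), started at level $c$ instead of after removing the common core. The case with no level above $c$ is handled correctly by Wilson's theorem.

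\textbf{The weights are miscomputed.} Since $w_{j-1}=\frac{k-j+1}{n-j+1}\,w_j$, you have $1-w_{j-1}/w_j=\frac{n-k}{n-j+1}$, not $\frac{k-j+1}{n-j+1}$. So the term $|\G_j|(w_j-w_{j-1})$ is essentially the full normalized size $|\G_j|/\binom{n-j}{k-j}$. Wilson's theorem bounds that only by $1$. It is not $O(k/n)$ per level.

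\textbf{The deficit-versus-surplus bookkeeping therefore cannot close.}
\begin{itemize}
\item Already for $m=c+1$, Wilson's bound on $\G_{c+1}$ leaves you with $|\G_c|w_c+1-\frac{k-c}{n-c}$.
\item The deficit you exhibit only gives $|\G_c|w_c\le\frac{(k-c)^2}{n-c}$, which exceeds $\frac{k-c}{n-c}$ as soon as $k-c\ge 2$.
\item With several nonempty levels, the a priori surplus is of order $m-c$, against a deficit of at most $1$.
\end{itemize}
The set $A_0$ does not rescue the star case at the top level either. If $\G_m$ lies in the star of some $T_m\subseteq A_0$, every set through $T_m$ already meets $A_0$ in $m$ points. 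So the containment $\F\subseteq\{B:|B\cap A_0|\ge m\}$ says nothing about $\G_m$.

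\textbf{What is missing} is a direct bound $|\G_j|\le(k+1)\binom{n-j-1}{k-j-1}$ for every level $j>c$, i.e.\ normalized size at most $(k+1)\frac{k-j}{n-j}$.
\begin{itemize}
\item \emph{Star case.} The kernel $T_j$ cannot lie in every member of $\F$, since otherwise $i_{\F}(A)\ge j>c$ for all $A\in\F$. This is exactly what the paper's removal of the common core (Lemma~\ref{lem:core}) guarantees. Take $B\in\F$ with $T_j\not\subseteq B$. Each $A\in\G_j$ contains $T_j$ and satisfies $|A\cap B|\ge j>|B\cap T_j|$, so $A$ meets $B\setminus T_j$. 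Hence $|\G_j|\le k\binom{n-j-1}{k-j-1}$.
\item \emph{Nontrivial case.} You need the Ahlswede--Khachatrian consequence, Corollary~\ref{cor:coarse}. The bound $k(k-j)\binom{n-j-1}{k-j-1}$ you quote has normalized size $k(k-j)^2/(n-j)=O(k^3/n)$, not $O(k^2/n)$. Summed over the levels, it would force $n$ of order $k^4$.
\end{itemize}

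\textbf{How the proof then closes.} Level $c$ needs a bound of the same type: the union bound over $A_0$ when $c=0$, and your deficit argument or Corollary~\ref{cor:coarse} when $c\ge 1$. With these bounds the total is at most $\frac{k^2}{n}+(k+1)\sum_{t=1}^{k-1}\frac{k-t}{n-t}<\frac{k^3+2k^2-k}{2(n-k)}\le 1$, which is exactly where the cubic hypothesis enters. The inequality is strict in this case, which gives the equality characterization. No comparison between a deficit in $\G_c$ and a surplus above it is needed.
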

Our main result improves the ground-set condition to one that is asymptotically best possible in~$k$.

\begin{theorem}\label{thm:quadratic}
There exists an absolute constant $D$ such that the following holds. Let $k\ge 1$, let
$\F\subseteq \binom{[n]}{k}$, and assume that $n\ge k^2/4+5k+D$. Then
\[
\sum_{A\in \F}\frac{1}{\binom{n-i_{\F}(A)}{k-i_{\F}(A)}}\le 1.
\]
Moreover, equality holds if and only if $\F$ is a $c$-star for some $0\le c\le k$.
\end{theorem}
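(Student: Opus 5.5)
The plan is to stratify $\F$ by the value of $i_\F$ and reduce to $t$-intersecting estimates at each level. For $0\le c\le k$ put $\F_c=\{A\in\F: i_\F(A)=c\}$, $\G_c=\{A\in\F: i_\F(A)\ge c\}$ and $w_c=1/\binom{n-c}{k-c}$. The first observation is that $\G_c$ is $c$-intersecting: every member of $\G_c$ meets every member of $\F\supseteq\G_c$ in at least $c$ elements. Since $w_c$ is increasing in $c$, Abel summation gives
\[
\sum_{A\in\F}w_{i_\F(A)}=\sum_{c}|\F_c|\,w_c=\sum_{c\ge 0}|\G_c|\,(w_c-w_{c-1}),\qquad w_{-1}:=0 .
\]
This converts the weighted sum into a combination of sizes of $c$-intersecting families. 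Note that $\max_c (c+1)(k-c+1)\approx (k+2)^2/4$, so the hypothesis $n\ge k^2/4+5k+D$ places every level $c$ simultaneously in the Wilson range. Hence $|\G_c|\le\binom{n-c}{k-c}$ for all $c$, and moreover we are in the regime where the Ahlswede--Khachatrian $r=1$ family, of size roughly $(c+2)\binom{n-c-1}{k-c-1}$, is a definite constant factor smaller than the $c$-star.

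Plugging only the Wilson bound into the Abel sum is too lossy; it gives $\sum_c\bigl(1-\binom{n-c}{k-c}/\binom{n-c+1}{k-c+1}\bigr)$, which exceeds $1$. The real content is therefore the interaction between levels. I would argue as follows. Let $m=\min_A i_\F(A)$, so $\F=\G_m$ is $m$-intersecting. Call level $c$ \emph{star-like} if $\G_c$ is contained in a $c$-star with some center $T_c$.

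Non-star-like levels are handled by a Hilton--Milner / Ahlswede--Khachatrian type stability statement for $t$-intersecting families. In our range of $n$ this should give $|\G_c|\le \bigl(\tfrac12+o(1)\bigr)\binom{n-c}{k-c}$, with some explicit constant below $1$ that I would fix using the extra $5k+D$ slack. Applying this to the Abel sum, such a level contributes at most that constant fraction of $w_c-w_{c-1}$ times $\binom{n-c}{k-c}$.

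For star-like levels, the defining property of $\F_c$ enters: each $A\in\F_c$ has a witness $B\in\F$ with $|A\cap B|=c$. If $\G_{c+1}$ is also star-like with center $T_{c+1}$, then every member of $\G_{c+1}$ must meet both $A$ and $B$ in at least $c+1$ elements. This is a cross-intersecting constraint, and for large $n$ it should force $\G_{c+1}$ to have size $O(k)\binom{n-c-2}{k-c-2}$ unless $T_{c+1}$ interacts very specially with $A\cup B$. The upshot I aim for is a one-step inequality of the form
\[
|\F_c|\,w_c+|\G_{c+1}|\,w_{c+1}\le |\G_c|\,w_c ,
\]
or a slightly weakened version absorbing an error term. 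Iterating this from the top level downward collapses the whole sum to at most $|\G_m|w_m\le1$. Equality would then force every inequality in the chain to be tight, which means a single nonempty level $\F=\F_c$ that is a full $c$-star.

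The main obstacle is making the one-step inequality hold uniformly in $c$ down to $n$ quadratic with the sharp constant $1/4$. The cubic bound of Theorem~\ref{thm:cubic} presumably comes from crude union bounds here, e.g.\ $k^2$ choices of witnesses times a factor $k/n$. To reach $k^2/4$, my first attempt would be to partition according to $|T_{c+1}\setminus A|$ and use the exact Ahlswede--Khachatrian extremal sizes rather than generic counting, so that the loss is governed by $(c+2)(k-c)/n\le (k+2)^2/(4n)$. The $5k+D$ term would be spent on lower-order corrections, such as the ratio $\binom{n-c-1}{k-c-1}/\binom{n-c}{k-c}=(k-c)/(n-c)$ and the small-$c$ cases where Hilton--Milner rather than Ahlswede--Khachatrian is the relevant stability result.
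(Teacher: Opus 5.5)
Your plan has a genuine gap, and it sits exactly in the range the theorem is about. In the quadratic regime, a nontrivial $c$-intersecting family need not be a constant factor smaller than the $c$-star. Your own estimate already shows this: $(c+2)\binom{n-c-1}{k-c-1}/\binom{n-c}{k-c}=(c+2)(k-c)/(n-c)$ tends to $1$ when $c\approx k/2$ and $n\approx k^2/4+5k$. Exactly, the Ahlswede--Khachatrian family satisfies $|H_1(n,k,c)|/\binom{n-c}{k-c}=1-\frac{(n-k)((n-k)-c(k-c)-1)}{(n-c)(n-c-1)}$. For $c=\lfloor k/2\rfloor$ and $n-k=k^2/4+4k+O(1)$ this is $1-(16+o(1))/k$. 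Taking $\F=J_c(n,k)$ realizes it as a non-star-like level $\G_c=\F$, and by Proposition~\ref{prop:Jt} this $\F$ has $\Phi_{n,k}(\F)\ge 1-(14+o(1))/k$. So no bound like $(\tfrac12+o(1))\binom{n-c}{k-c}$, or any constant below $1$, holds for non-star-like levels. The dominant level can have deficit only $\Theta(1/k)$, and the proof must show that everything else contributes less than that deficit, with the right constant. That comparison is where the coefficient $1/4$ is decided, and your plan has no mechanism for it.

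The one-step inequality is also impossible as written. Since $\G_c=\F_c\sqcup\G_{c+1}$, it is equivalent to $|\G_{c+1}|(w_{c+1}-w_c)\le 0$, whereas $w_{c+1}/w_c=(n-c)/(k-c)>1$. More fundamentally, $\Phi_{n,k}(\F)\ge |\G_m|w_m$ always holds, strictly as soon as two levels are nonempty. So no chain can collapse the sum onto $|\G_m|w_m$. What is needed is a trade-off: the presence of other levels must force the dominant level to be correspondingly smaller.

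The paper gets this trade-off from cross-intersecting product theorems: Matsumoto--Tokushige for $s=1$, and Chen--Li--Wu--Zhang and Zhang--Wu for $s\ge 2$.
\begin{itemize}
\item After removing the common core, $\G_a$ and $\G_b$ are cross-$\max\{a,b\}$-intersecting.
\item Write $\mu_t:=|\G_t|/\binom{n-t}{k-t}$, $M=\mu_m=\max_t\mu_t$ and $\rho=(k-1)/(n-k+1)\le 4/k$. One gets $\mu_{m\pm d}\le\min\{M,\rho^d/M\}$ and $\frac{|\F|}{\binom nk}\,M\le \frac kn$.
\item When $M$ is close to $1$, the empty core forces a trivial level to miss at least $\binom{n-k-1}{k-t}$ sets of its star. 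Hence $M$ must come from the Ahlswede--Khachatrian nontrivial bound, which gives $1-M\ge (16+o(1))/k$.
\item In that same case, all other terms of the telescoped sum total at most $(12+o(1))/((1-e^{-9}/2)k)$, which is smaller.
\item The other ranges of $M$ are easy, and bounded $k$ is absorbed into $D$.
\end{itemize}
Your witness argument for star-like levels points toward this inter-level interaction, but it gives no quantitative estimate of this kind.
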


To see the leading $k^2/4$ term in Theorem~\ref{thm:quadratic} is best possible  we consider the Ahlswede--Khachatrian construction with $r=1$:
\[
J_t(n,k):=\{A\in \binom{[n]}{k}: |A\cap [t+2]|\ge t+1\}.
\]

In Section~\ref{sec:sharpness} we show $J_t(n,k)$ violates our inequality whenever $n<k+(k-t)(t+1)$.  Taking $t=\lfloor k/2\rfloor$ gives
counterexamples up to
\[
n<\frac{k^2}{4}+\frac{3k}{2}+O(1),
\]
so the order and the leading coefficient in Theorem~\ref{thm:quadratic} cannot be improved.

For the rest of the paper we denote the sum in Theorems~\ref{thm:cubic} and~\ref{thm:quadratic} by
\[
\Phi_{n,k}(\F):=\sum_{A\in \F}\frac{1}{\binom{n-i_{\F}(A)}{k-i_{\F}(A)}}.
\]
If $\F$ is $t$-intersecting, then $i_{\F}(A)\ge t$ for every $A\in\F$, and hence
\[
\Phi_{n,k}(\F)\ge \frac{|\F|}{\binom{n-t}{k-t}}.
\]
Thus Theorem~\ref{thm:quadratic} simultaneously strengthens the usual $t$-star upper bounds for every
$t$ in its range of $n$.

The idea of replacing a sequence of extremal results by an inequality over arbitrary families has been considered in other settings; in graph terminology this is sometimes referred to as a \emph{localized} version of the extremal problem~\cite{MT-local}.
On the graph side, Brada\v{c}~\cite{Br} and Malec and Tompkins~\cite{MT-local} proved localized versions of Tur\'an's theorem, and versions of the Erd\H{o}s--Gallai theorem, the LYM inequality, and Erd\H{o}s--Ko--Rado were considered in~\cite{MT-local} (this earlier version is weighted based on matching number rather than $t$-intersection). Applications of such results to Ramsey--Tur\'an problems appear in~\cite{BCMM}.
Kirsch and Nir~\cite{KN} considered localized versions of generalized Tur\'an problems. Several further localized or closely related weighted and spectral results have been considered; see for example~\cite{AS,BBL,ZZ-cycles,ZZ-hyperEG,LN,AC-framework,AC-Turan,LiuNing,KKP}.

A related two-part inequality arises from Borg's work~\cite{Borg} (see also Wang and Zhang~\cite{WZ}).
For $\A\subseteq \binom{[n]}{k}$, write
\[
\A^{t,+}:=\{A\in \A: |A\cap B|\ge t \text{ for all } B\in \A\setminus\{A\}\},
\qquad
\A^{t,-}:=\A\setminus \A^{t,+}.
\]
In the range $n\ge (t+1)(k-t+1)$, Borg's result~\cite[Theorem~3.8 and
Corollary~3.9]{Borg}, together with Wilson's result on the largest $t$-intersecting families
on $\binom{[n]}{k}$~\cite{Wilson}, gives
\[
\frac{|\A^{t,+}|}{\binom{n-t}{k-t}}+\frac{|\A^{t,-}|}{\binom{n}{k}}\le 1.
\]
Since $\A^{t,+}=\{A\in \A:i_{\A}(A)\ge t\}$, Borg's inequality is only sensitive to whether $i_{\A}(A)$ is below $t$ or at least $t$. The sum $\Phi_{n,k}$ refines this result to the whole profile of possible intersections
$0,1,\dots,k$.

Two families $\A,\B\subseteq \binom{[n]}{k}$ are
\emph{cross-$t$-intersecting} if $|A\cap B|\ge t$ for every $A\in \A$ and $B\in \B$. When $t=1$ we simply say that the families are \emph{cross-intersecting}. A classical theorem of Hilton~\cite{HiltonCross} shows that for $n\ge 2k$ and given pairwise cross-intersecting families $\F_1,\F_2,\dots,\F_m \subseteq \binom{[n]}{k}$, the sum $\sum_{i=1}^m |\F_i|$ is at most the maximum of $\binom{n}{k}$ and $m\binom{n-1}{k-1}$. The sharp examples are given by either taking one family to be $\binom{[n]}{k}$ and the rest empty, or by taking them all to be identical stars. Borg's work~\cite{Borg} implies that the analogous result holds for cross-$t$-intersecting families.
We prove an extension of Hilton's theorem (and Borg's cross-$t$-intersecting version) to a setting in which the required cross-intersection thresholds are allowed to vary.

\begin{theorem}\label{thm:multi-hilton}
For each $i\in\{1,\dots,k\}$, let $m_i\ge 0$ be integers and let
$\F_{i,j}\subseteq \binom{[n]}{k}$ for $1\le j\le m_i$. Suppose that any two distinct families
$\F_{i,j}$ and $\F_{i',j'}$ are cross-$\max\{i,i'\}$-intersecting. Put
\[
M_s:=\max\!\left\{1,\sum_{r=1}^s m_r\right\}\qquad (0\le s\le k).
\]
If $n$ satisfies the lower bound in either Theorem~\ref{thm:cubic} or Theorem~\ref{thm:quadratic}, then
\[
\sum_{i=1}^k\sum_{j=1}^{m_i}|\F_{i,j}|
\le
\max_{0\le s\le k}M_s\binom{n-s}{k-s}.
\]
\end{theorem}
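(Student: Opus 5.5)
The plan is to reduce Theorem~\ref{thm:multi-hilton} to the weighted inequality of Theorem~\ref{thm:cubic} or Theorem~\ref{thm:quadratic}, applied to the single family
\[
\G:=\bigcup_{i=1}^k\bigcup_{j=1}^{m_i}\F_{i,j}.
\]
For $A\in\G$ let $w(A)$ be the number of pairs $(i,j)$ with $A\in\F_{i,j}$. Then the left-hand side of the theorem equals $\sum_{A\in\G}w(A)$. The key claim is
\[
w(A)\le M_{i_{\G}(A)}\qquad\text{for every } A\in\G .
\]
Granting the claim, and writing $s(A)=i_{\G}(A)\in\{0,\dots,k\}$, we get
\[
\sum_{A\in\G}w(A)\le \sum_{A\in\G}\frac{M_{s(A)}\binom{n-s(A)}{k-s(A)}}{\binom{n-s(A)}{k-s(A)}}
\le \Bigl(\max_{0\le s\le k}M_s\binom{n-s}{k-s}\Bigr)\,\Phi_{n,k}(\G).
\]
Since $n$ satisfies the hypothesis of Theorem~\ref{thm:cubic} or Theorem~\ref{thm:quadratic}, we have $\Phi_{n,k}(\G)\le 1$, which finishes the proof.

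To prove the claim, fix $A\in\G$ and set $s=i_{\G}(A)$. Choose $B\in\G$ with $|A\cap B|=s$; note that $B=A$ is allowed when $s=k$. Fix one family $\F_{i',j'}$ that contains $B$. Now take any family $\F_{i,j}\ni A$ with $(i,j)\ne(i',j')$. Cross-$\max\{i,i'\}$-intersection of these two families gives
\[
s=|A\cap B|\ge\max\{i,i'\}.
\]
In particular $i\le s$ for every such family. We then split into cases according to whether $\F_{i',j'}$ itself contains $A$.
\begin{itemize}
\item If $A\notin\F_{i',j'}$, every family containing $A$ has first index at most $s$. Hence $w(A)\le\sum_{r\le s}m_r\le M_s$.
\item If $A\in\F_{i',j'}$ and $i'\le s$, the same count $w(A)\le\sum_{r\le s}m_r\le M_s$ holds, since $(i',j')$ is also among the indices with $i\le s$.
\item If $A\in\F_{i',j'}$ and $i'>s$, then no other family can contain $A$, because that would force $s\ge\max\{i,i'\}\ge i'>s$. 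So $w(A)=1\le M_s$; this is exactly where the $\max\{1,\cdot\}$ in the definition of $M_s$ is needed.
\end{itemize}

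The only delicate point is this bookkeeping in the claim. Two issues must be handled there: a set may lie in several families, and the witness $B$ may share a family with $A$. The choice of a single family for $B$ together with the case split above handles both. Everything else is a direct application of the earlier theorems.
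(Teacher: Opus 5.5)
Your proposal is correct and follows essentially the same route as the paper: take the union of all the families, bound the multiplicity of each member $A$ by $M_{i(A)}$, where $i(A)$ is its minimum intersection with the union, and then invoke $\Phi_{n,k}\le 1$ for the union. The only difference is in how the multiplicity bound is checked: the paper splits on whether $A$ lies in exactly one family or in at least two (in the latter case checking every $B$), while you fix a minimizing witness $B$ and split on the family chosen for it; both arguments are valid.
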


Restricting to the case when $\F_{i,j}$ are empty except for $i=t$, Theorem~\ref{thm:multi-hilton} yields the bound
\[
\sum_{j=1}^m |\F_{t,j}|
\le
\max\!\left\{\binom{n}{k},m\binom{n-t}{k-t}\right\}
\]
in the large ground-set range covered by Theorems~\ref{thm:cubic} and~\ref{thm:quadratic}. For
$t=1$ this recovers Hilton's cross-intersection theorem \cite{HiltonCross} in that range.

The paper is organized as follows. Section~\ref{sec:prelim} contains lemmas which are common to the proofs of both the cubic and quadratic bounds. Section~\ref{sec:inputs} records the
existing intersection theorems used in the proofs. Section~\ref{sec:cubic} proves the cubic bound, and Section~\ref{sec:quadratic} proves the quadratic bound. Section~\ref{sec:sharpness} gives the quadratic-order
counterexamples, and Section~\ref{sec:hilton} proves some Hilton-type consequences.

\section{Preliminaries}\label{sec:prelim}

The following lemmas are used in both Theorem~\ref{thm:cubic} and Theorem~\ref{thm:quadratic}.

\begin{lemma}\label{lem:core}
Let $\F\subseteq \binom{[n]}{k}$, and write
\[
C:=\bigcap_{A\in \F}A,
\qquad
c:=|C|,
\qquad
\F':=\{A\setminus C:A\in \F\},
\]
viewed as a family of $(k-c)$-subsets of an $(n-c)$-element ground set. Then
\[
\Phi_{n,k}(\F)=\Phi_{n-c,k-c}(\F').
\]
\end{lemma}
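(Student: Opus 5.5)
The plan is to verify the identity term by term, via a weight-preserving bijection between $\F$ and $\F'$. Since every member of $\F$ contains $C$, the map $A\mapsto A\setminus C$ is injective on $\F$: we recover $A$ as $(A\setminus C)\cup C$. So it is a bijection $\F\to\F'$. Each $A\setminus C$ has size $k-c$ and lies in the $(n-c)$-element set $[n]\setminus C$. Hence $\F'$ is a legitimate family of $(k-c)$-subsets of that ground set, and the sum $\Phi_{n-c,k-c}(\F')$ makes sense. We may assume $\F\neq\emptyset$; if $\F=\emptyset$, both sums are empty and equal to $0$ (with whatever convention is used for $C$).

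Next we compare the minimal intersections. For $A,B\in\F$ we have $C\subseteq A\cap B$, so
\[
|A\cap B| = c + |(A\setminus C)\cap(B\setminus C)|.
\]
Taking the minimum over $B\in\F$, and using that $B\mapsto B\setminus C$ runs over all of $\F'$, gives
\[
i_{\F}(A)=c+i_{\F'}(A\setminus C).
\]
Note that $B=A$ is allowed in the minimum in both families, so the definitions match exactly.

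Finally we compare the weights. Write $i=i_{\F'}(A\setminus C)$. Then
\[
\binom{n-i_{\F}(A)}{k-i_{\F}(A)}=\binom{n-c-i}{k-c-i}=\binom{(n-c)-i}{(k-c)-i},
\]
and the right-hand side is exactly the weight of $A\setminus C$ in $\Phi_{n-c,k-c}(\F')$. Summing over the bijection gives $\Phi_{n,k}(\F)=\Phi_{n-c,k-c}(\F')$.

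There is no real obstacle here. The only care needed is the bookkeeping: the minimum includes $B=A$, and the ground set of $\F'$ has size exactly $n-c$, so that the binomial parameters shift consistently.
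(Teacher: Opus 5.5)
Your proposal is correct and follows the paper's proof: both use $i_{\F}(A)=c+i_{\F'}(A\setminus C)$, shift the binomial parameters, and sum. You also state explicitly that $A\mapsto A\setminus C$ is a bijection $\F\to\F'$, which the paper uses only implicitly when it sums over $A\in\F$.
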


\begin{proof}
For each $A\in \F$, the set $A\setminus C$ lies in $\F'$. Since every member of $\F$ contains $C$,
we have
\[
i_{\F}(A)=c+i_{\F'}(A\setminus C).
\]
Therefore
\[
\binom{n-i_{\F}(A)}{k-i_{\F}(A)}
=
\binom{(n-c)-i_{\F'}(A\setminus C)}{(k-c)-i_{\F'}(A\setminus C)}.
\]
Summing over $A\in \F$ proves the lemma.
\end{proof}
We refer to the family $C$ in Lemma~\ref{lem:core} as the \emph{common core} of $\F$. Assume now that $k\ge 1$ and that $\bigcap_{A\in \F}A=\varnothing$. For $0\le t\le k$, define
\[
\G_t:=\{A\in \F:i_{\F}(A)\ge t\}.
\]
Then $\G_0=\F$, each $\G_t$ is $t$-intersecting, and $\G_k=\varnothing$. Indeed, if some
$A\in \G_k$, then $|A\cap B|\ge k$ for every $B\in \F$, so every member of $\F$ equals $A$,
contradicting the assumption that the common core is empty.

\begin{lemma}\label{lem:tel}
Assume that $k\ge 1$ and $\bigcap_{A\in \F}A=\varnothing$. Then
\[
\Phi_{n,k}(\F)
=
\frac{|\F|}{\binom{n}{k}}+
\sum_{t=1}^{k-1}\frac{n-k}{n-t+1}\cdot \frac{|\G_t|}{\binom{n-t}{k-t}}.
\]
In particular,
\[
\Phi_{n,k}(\F)
\le
\frac{|\F|}{\binom{n}{k}}+
\sum_{t=1}^{k-1}\frac{|\G_t|}{\binom{n-t}{k-t}}.
\]
\end{lemma}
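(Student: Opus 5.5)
Write $w_t:=1/\binom{n-t}{k-t}$ for $0\le t\le k$. The plan is to sort the sum defining $\Phi_{n,k}(\F)$ by the value of $i_\F(A)$ and then apply Abel summation.

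Every $A\in\F$ has $i_\F(A)\in\{0,1,\dots,k-1\}$, since $\G_k=\varnothing$ (shown just before the lemma). Hence
\[
\Phi_{n,k}(\F)=\sum_{t=0}^{k-1}w_t\,\bigl(|\G_t|-|\G_{t+1}|\bigr),
\]
using the convention $|\G_k|=0$.

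Summation by parts turns this into
\[
\Phi_{n,k}(\F)=w_0|\G_0|+\sum_{t=1}^{k-1}(w_t-w_{t-1})|\G_t|.
\]
This uses $\G_0=\F$, and the boundary term at $t=k$ vanishes because $\G_k=\varnothing$.

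It remains to identify the coefficient $w_t-w_{t-1}$. Put $N=n-t+1$ and $K=k-t+1$. Then
\[
\binom{n-t+1}{k-t+1}=\frac{N}{K}\binom{N-1}{K-1}=\frac{n-t+1}{k-t+1}\binom{n-t}{k-t}.
\]
It follows that
\[
w_t-w_{t-1}=w_t\Bigl(1-\frac{k-t+1}{n-t+1}\Bigr)=w_t\cdot\frac{n-k}{n-t+1}.
\]
Substituting this into the summation-by-parts identity gives exactly the claimed equality.

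For the inequality, we need $0\le \frac{n-k}{n-t+1}\le 1$ for $1\le t\le k-1$. This holds because $t\ge1$ gives $n-t+1\le n$, and $n\ge k$ gives $n-k\ge0$. Since each $|\G_t|\ge 0$, the inequality follows.

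The proof is short. The only points that need care are the boundary terms in the summation by parts, where the vanishing of $\G_k$ (from the empty common core) is used, and the binomial identity relating $w_t$ and $w_{t-1}$.
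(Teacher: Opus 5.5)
Your proof is correct and is essentially the paper's argument: split $\Phi_{n,k}(\F)$ by the value of $i_{\F}(A)$, telescope using $\G_0=\F$ and $\G_k=\varnothing$, and rewrite $\binom{n-t}{k-t}^{-1}-\binom{n-t+1}{k-t+1}^{-1}$ with the absorption identity. One small slip: $\frac{n-k}{n-t+1}\le 1$ holds because $t\le k-1$ gives $n-t+1\ge n-k+2>n-k$; the fact $n-t+1\le n$ that you cite bounds the denominator in the wrong direction.
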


\begin{proof}
The members of $\F$ with $i_{\F}(A)=t$ are exactly $\G_t\setminus \G_{t+1}$, so
\[
\Phi_{n,k}(\F)=\sum_{t=0}^{k-1}(|\G_t|-|\G_{t+1}|)\binom{n-t}{k-t}^{-1}.
\]
Since $\G_k=\varnothing$, this telescopes to
\[
\Phi_{n,k}(\F)=\frac{|\F|}{\binom{n}{k}}+
\sum_{t=1}^{k-1}|\G_t|\left(\binom{n-t}{k-t}^{-1}-\binom{n-t+1}{k-t+1}^{-1}\right).
\]
Using
\[
\binom{n-t+1}{k-t+1}^{-1}
=
\frac{k-t+1}{n-t+1}\binom{n-t}{k-t}^{-1},
\]
we obtain
\[
\binom{n-t}{k-t}^{-1}-\binom{n-t+1}{k-t+1}^{-1}
=
\frac{n-k}{n-t+1}\binom{n-t}{k-t}^{-1},
\]
which gives the formula.
\end{proof}

\section{Intersection theorems}\label{sec:inputs}
A $t$-intersecting family is called \emph{trivial} if all its members contain a fixed $t$-set and
\emph{nontrivial} otherwise. We use the following form of the complete nontrivial-intersection theorem of
Ahlswede and Khachatrian \cite{AK-nt}.

\begin{theorem}[Ahlswede--Khachatrian]\label{thm:AK}
Let $1\le t<k$, and let $\A\subseteq \binom{[n]}{k}$ be a nontrivial $t$-intersecting family. If
\[
n>(t+1)(k-t+1),
\]
then
\[
|\A|\le \max\{|H_1(n,k,t)|,|H_2(n,k,t)|\},
\]
where
\[
H_1(n,k,t):=\left\{A\in \binom{[n]}{k}: |A\cap [t+2]|\ge t+1\right\},
\]
and
\[
H_2(n,k,t):=\left\{A\in \binom{[n]}{k}: [t]\subseteq A,\ A\cap [t+1,k+1]\neq \varnothing\right\}
\cup
\left\{[k+1]\setminus\{i\}: i\in [t]\right\}.
\]
Moreover,
\[
|H_1(n,k,t)|=(t+2)\binom{n-t-2}{k-t-1}+\binom{n-t-2}{k-t-2},
\]
and
\[
|H_2(n,k,t)|=\binom{n-t}{k-t}-\binom{n-k-1}{k-t}+t.
\]
\end{theorem}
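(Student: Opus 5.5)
This statement is the complete nontrivial $t$-intersection theorem of Ahlswede and Khachatrian~\cite{AK-nt}, so I will not reprove the extremal inequality. The plan has two parts. First, translate their statement into the form given here. Second, verify the two cardinality formulas by direct counting, since those are what the later sections actually use.

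For the first part, recall that Ahlswede--Khachatrian prove the following for nontrivial $t$-intersecting $\A\subseteq\binom{[n]}{k}$ with $n>(t+1)(k-t+1)$. If $k>2t+1$, the maximum is attained by the Hilton--Milner type family $H_2(n,k,t)$. If $k\le 2t+1$, the maximum is attained by $H_1(n,k,t)$. In either case the true bound is one of the two quantities, so it is at most $\max\{|H_1|,|H_2|\}$. Stating it this way avoids the case distinction, and only the upper bound is needed.

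I will also check that both families are genuinely nontrivial and $t$-intersecting, so that the maximum is attained. For $H_1$, two $(t+1)$-subsets of a $(t+2)$-set share at least $t$ elements. No $t$-set lies in all members, since for each $t$-set $T\subseteq[t+2]$ some $(t+1)$-subset of $[t+2]$ misses a point of $T$.

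For $H_2$, the sets containing $[t]$ pairwise meet in $[t]$. Consider a set $B=[k+1]\setminus\{i\}$ with $i\in[t]$, and a set $A$ with $[t]\subseteq A$ and $A\cap[t+1,k+1]\ne\varnothing$. Then $B$ contains the $t-1$ elements of $[t]\setminus\{i\}$, plus one further common element in $[t+1,k+1]$, giving intersection at least $t$. Two distinct sets $[k+1]\setminus\{i\}$ and $[k+1]\setminus\{j\}$ meet in $k-1\ge t$ elements. Nontriviality holds because the extra sets miss points of $[t]$.

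For the cardinalities, split $H_1$ by $|A\cap[t+2]|$.
\begin{itemize}
\item If $|A\cap[t+2]|=t+1$, there are $t+2$ choices of the omitted element of $[t+2]$ and $\binom{n-t-2}{k-t-1}$ choices of the rest of $A$.
\item If $[t+2]\subseteq A$, there are $\binom{n-t-2}{k-t-2}$ choices.
\end{itemize}
Summing gives the stated formula for $|H_1|$.

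For $H_2$, the $k$-sets containing $[t]$ number $\binom{n-t}{k-t}$. Those that avoid the $(k-t+1)$-set $[t+1,k+1]$ must choose their remaining $k-t$ elements from $n-k-1$ points, giving $\binom{n-k-1}{k-t}$. The $t$ extra sets $[k+1]\setminus\{i\}$ are distinct and do not contain $[t]$, so they are disjoint from the first part. Together these give $|H_2|=\binom{n-t}{k-t}-\binom{n-k-1}{k-t}+t$.

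The only delicate point is the first part: making sure the hypothesis $n>(t+1)(k-t+1)$ matches the range in which \cite{AK-nt} identifies these two families as the extremal ones. If there is any doubt at the boundary, I would fall back on the fact that the theorem is only applied later for $n$ much larger than this threshold, where the classical Hilton--Milner--Frankl form suffices.
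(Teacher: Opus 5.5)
Your route is the paper's: the paper gives no argument for Theorem~\ref{thm:AK} beyond citing Ahlswede and Khachatrian. Your two counts are correct. These are the split of $H_1$ by $|A\cap[t+2]|$, and the count of $H_2$ as the sets containing $[t]$, minus those avoiding $[t+1,k+1]$, plus the $t$ extra sets.

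However, your paraphrase of the cited theorem is wrong for $k>2t+1$. It is not true that the Hilton--Milner type family $H_2(n,k,t)$ is extremal throughout the range $n>(t+1)(k-t+1)$. In that regime the Ahlswede--Khachatrian bound is genuinely $\max\{|H_1|,|H_2|\}$, and which term is larger depends on $n$. For example, take $t=2$, $k=6$, $n=16>15=(t+1)(k-t+1)$:
\[
|H_1(16,6,2)|=4\binom{12}{3}+\binom{12}{2}=946>877=\binom{14}{4}-\binom{9}{4}+2=|H_2(16,6,2)|,
\]
and $|H_1|>|H_2|$ persists for $16\le n\le 20$. So the maximum in the statement is not just a way to avoid a case distinction; it is exactly what the theorem asserts there. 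Your final inequality survives once you quote the theorem correctly, because in both regimes the true bound is at most the maximum.

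Your fallback remark is also inaccurate. The paper does not use the theorem only far above the threshold. In Proposition~\ref{prop:large}, Lemma~\ref{lem:layer} is invoked for every $1\le t\le k-1$ under only $n-k\ge k^2/4+4k$. For $t\approx k/2$ this is just $O(k)$ above $(t+1)(k-t+1)\approx k^2/4+k$. A version of the theorem valid only for $n$ much larger than the threshold would therefore not support Theorem~\ref{thm:quadratic}. Nor would it prove the statement as posed, whose hypothesis is exactly $n>(t+1)(k-t+1)$.
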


\begin{corollary}\label{cor:coarse}
Let $1\le t<k$, and let $\A\subseteq \binom{[n]}{k}$ be a nontrivial $t$-intersecting family. If
\[
n>(t+1)(k-t+1),
\]
then
\[
|\A|\le (k+1)\binom{n-t-1}{k-t-1}.
\]
\end{corollary}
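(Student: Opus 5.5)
By Theorem~\ref{thm:AK}, it suffices to show that both $|H_1(n,k,t)|$ and $|H_2(n,k,t)|$ are at most $(k+1)\binom{n-t-1}{k-t-1}$, using the explicit formulas recorded there. We assume throughout that $1\le t<k$ and $n>(t+1)(k-t+1)\ge k+1$.

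\textbf{Bounding $H_2$.} The family $H_2(n,k,t)$ consists of the sets containing $[t]$ and meeting $[t+1,k+1]$, together with $t$ extra sets. A union bound over the $k-t+1$ elements $j\in[t+1,k+1]$ gives
\[
\binom{n-t}{k-t}-\binom{n-k-1}{k-t}\le (k-t+1)\binom{n-t-1}{k-t-1}.
\]
Adding $t\le t\binom{n-t-1}{k-t-1}$ then gives $|H_2|\le (k+1)\binom{n-t-1}{k-t-1}$.

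\textbf{Bounding $H_1$.} We need
\[
(t+2)\binom{n-t-2}{k-t-1}+\binom{n-t-2}{k-t-2}\le (k+1)\binom{n-t-1}{k-t-1}.
\]
We use $\binom{n-t-2}{k-t-1}\le \binom{n-t-1}{k-t-1}$. For the second term we use
\[
\binom{n-t-2}{k-t-2}=\frac{k-t-1}{n-t-1}\binom{n-t-1}{k-t-1}\le\binom{n-t-1}{k-t-1}
\]
when $k-t\ge 2$; when $k=t+1$ this term is $0$. The left side is therefore at most $(t+3)\binom{n-t-1}{k-t-1}$, which suffices whenever $t+3\le k+1$, i.e.\ $t\le k-2$.

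The remaining case is $t=k-1$, where the inequality to prove is $(t+2)\binom{n-t-2}{0}\le (k+1)\binom{n-t-1}{0}$, i.e.\ $k+1\le k+1$, which holds with equality. (Alternatively, one can tighten the first term as $\binom{n-t-2}{k-t-1}=\frac{n-k}{n-t-1}\binom{n-t-1}{k-t-1}$ and combine it with the second term, which gives the bound $\frac{(t+2)(n-k)+k-t-1}{n-t-1}\le t+2$.)

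Combining the two bounds with Theorem~\ref{thm:AK} proves the corollary. There is no real obstacle here; the only points needing care are the boundary case $k=t+1$ and making sure the union-bound estimate for $H_2$ is stated with the correct count of $k-t+1$ elements.
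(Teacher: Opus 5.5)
Your proposal is correct and follows the paper's proof: apply Theorem~\ref{thm:AK}, then bound $|H_2(n,k,t)|$ by the same union bound over the $k-t+1$ points of $[t+1,k+1]$ plus the $t$ extra sets, giving $(k+1)\binom{n-t-1}{k-t-1}$. For $H_1(n,k,t)$ the paper avoids your split at $t=k-1$: every member contains one of the $t+2$ subsets of $[t+2]$ of size $t+1$, so $|H_1|\le (t+2)\binom{n-t-1}{k-t-1}$ directly, which is the same bound your alternative ratio computation (or Pascal's rule $\binom{n-t-1}{k-t-1}=\binom{n-t-2}{k-t-1}+\binom{n-t-2}{k-t-2}$) gives.
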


\begin{proof}
By Theorem~\ref{thm:AK}, it is enough to bound the sizes of $H_1(n,k,t)$ and $H_2(n,k,t)$.
Every member of $H_1(n,k,t)$ contains one of the $t+2$ subsets of $[t+2]$ of size $t+1$. Hence
\[
|H_1(n,k,t)|\le (t+2)\binom{n-t-1}{k-t-1}\le (k+1)\binom{n-t-1}{k-t-1}.
\]
For $H_2(n,k,t)$, the first part is contained in the union, over the $k-t+1$ points of
$[t+1,k+1]$, of the families of $k$-sets containing $[t]$ and that point. The second part contributes
$t$ additional sets. Thus
\[
|H_2(n,k,t)|\le (k-t+1)\binom{n-t-1}{k-t-1}+t.
\]
Since $\binom{n-t-1}{k-t-1}\ge 1$, this is at most
\[
(k-t+1+t)\binom{n-t-1}{k-t-1}=(k+1)\binom{n-t-1}{k-t-1}.
\]
The result follows.
\end{proof}

We also use the cross-intersecting product theorem of Matsumoto and Tokushige \cite{MT} and
the cross-$s$-intersecting product bounds of Chen, Li, Wu, and Zhang and of Zhang and Wu \cite{CLWZ,ZW}.

\begin{theorem}[Matsumoto--Tokushige]\label{thm:MT}
Let $\A,\B\subseteq \binom{[n]}{k}$ be cross-intersecting, and assume $n\ge 2k$. Then
\[
|\A|\,|\B|\le \binom{n-1}{k-1}^2.
\]
\end{theorem}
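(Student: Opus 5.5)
The plan is to prove Theorem~\ref{thm:MT} spectrally, by a cross version of Hoffman's ratio bound on the Kneser graph $K(n,k)$. Its vertex set is $\binom{[n]}{k}$, with $A\sim B$ iff $A\cap B=\varnothing$. Saying that $\A,\B$ are cross-intersecting is exactly saying that no edge of $K(n,k)$ joins $\A$ to $\B$. If $M$ is the adjacency matrix, this gives $\mathbf 1_{\A}^{T}M\mathbf 1_{\B}=0$. Put $N=\binom{n}{k}$. We will use the classical spectrum of $K(n,k)$: the eigenvalues are $\lambda_i=(-1)^i\binom{n-k-i}{k-i}$ for $0\le i\le k$. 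In particular $\lambda_0=d=\binom{n-k}{k}$ is the degree, with the all-ones eigenvector.

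The key spectral fact is that for $n\ge 2k$,
\[
\max_{i\ge1}|\lambda_i|=\binom{n-k-1}{k-1}=:\lambda .
\]
This holds because $\binom{n-k-i}{k-i}$ is nonincreasing in $i$: the ratio of consecutive terms is $(k-i)/(n-k-i)\le 1$ when $n\ge 2k$. Establishing this is the step I expect to need the most care. The usual Hoffman bound only needs the \emph{smallest} eigenvalue. The cross (two-family) version needs a bound on all nontrivial eigenvalues in absolute value, and this is exactly where the hypothesis $n\ge 2k$ enters.

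Next, expand $\mathbf 1_{\A}=\tfrac{|\A|}{N}\mathbf 1+u$ and $\mathbf 1_{\B}=\tfrac{|\B|}{N}\mathbf 1+v$ with $u,v\perp\mathbf 1$. Decomposing $u$ and $v$ in an orthonormal eigenbasis and using Cauchy--Schwarz gives
\[
0=\mathbf 1_{\A}^{T}M\mathbf 1_{\B}\ \ge\ \frac{d|\A||\B|}{N}-\lambda\|u\|\|v\|.
\]
Here $\|u\|^2=|\A|(1-|\A|/N)$, and similarly for $v$. Write $x=|\A|/N$, $y=|\B|/N$ and $s=\sqrt{xy}$. Then
\[
d\,xy\le\lambda\sqrt{xy(1-x)(1-y)}.
\]
By AM--GM, $(1-x)(1-y)\le (1-s)^2$, since $\sqrt{(1-x)(1-y)}\le 1-\tfrac{x+y}{2}\le 1-s$. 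Hence $ds\le \lambda(1-s)$, that is, $s\le \lambda/(d+\lambda)$. The degenerate cases where $\A$ or $\B$ is empty are trivial.

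Finally, Hoffman's computation gives
\[
\frac{\lambda N}{d+\lambda}=\binom{n-1}{k-1}.
\]
To verify it, note $\binom{n-k}{k}+\binom{n-k-1}{k-1}=\frac{n}{k}\binom{n-k-1}{k-1}$ and $N=\frac nk\binom{n-1}{k-1}$. Therefore $\sqrt{|\A||\B|}=sN\le\binom{n-1}{k-1}$, which is the claim. As a sanity check, in the boundary case $n=2k$ the graph is a perfect matching with $d=\lambda=1$, and the bound reads $|\A||\B|\le (N/2)^2$, as expected.
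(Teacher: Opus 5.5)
The paper does not prove this statement. It quotes it from Matsumoto and Tokushige as an external input and uses it in only one place: the proof of Proposition~\ref{prop:large}, to bound $|\F|\,|\G_1|\le\binom{n-1}{k-1}^2$ when $m=1$. There $n\ge k^2/4+5k$, far above $n\ge 2k$. So your argument is a genuinely different route, namely an actual proof where the paper has a citation. It is correct and self-contained, given the classical spectrum of the Kneser graph.

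The steps check out:
\begin{itemize}
\item The cross terms $\mathbf 1^{T}Mv$ and $u^{T}M\mathbf 1$ vanish by regularity.
\item $\mathbf 1^{\perp}$ is $M$-invariant, and the operator norm of $M$ on it is $\max_{i\ge1}|\lambda_i|=\binom{n-k-1}{k-1}$. This is what makes the eigenvalue coincidences at $n=2k$ (where every $|\lambda_i|=1$) harmless.
\item The AM--GM step is valid since $0\le x,y\le 1$.
\item The final identity divides by $\binom{n-k-1}{k-1}$, which is nonzero precisely because $n\ge 2k$.
\end{itemize}

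The step you flag as needing the most care, the monotonicity of $\binom{n-k-i}{k-i}$ in $i$, is actually routine. The real input is the Kneser spectrum itself, which you take as known.

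Compared with importing the theorem, your approach gives a short proof over the full range $n\ge 2k$. It also yields the stronger two-parameter inequality $d\sqrt{xy}\le\lambda\sqrt{(1-x)(1-y)}$, which controls unbalanced pairs as well as the product. What it does not give without further work is a characterization of the extremal pairs, but nothing in the paper needs that.
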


\begin{theorem}[Chen--Li--Wu--Zhang; Zhang--Wu]\label{thm:cross}
Let $2\le s\le k$, and let $\A,\B\subseteq \binom{[n]}{k}$ be cross-$s$-intersecting. Assume either
\begin{enumerate}
    \item[(i)] $s=2$ and $n\ge 3.38k$, or
    \item[(ii)] $s\ge 3$ and $n\ge (s+1)(k-s+1)$.
\end{enumerate}
Then
\[
|\A|\,|\B|\le \binom{n-s}{k-s}^2.
\]
\end{theorem}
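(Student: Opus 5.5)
The plan is to prove Theorem~\ref{thm:cross} with a spectral argument in the Johnson scheme. This is a cross-intersecting analogue of Wilson's proof of the $t$-intersecting Erd\H{o}s--Ko--Rado bound. It should handle case (ii) directly, while case (i) needs a separate combinatorial (shifting) argument.

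\textbf{Step 1: the spectral bound.} Let $J(n,k)$ be the Johnson scheme on $\binom{[n]}{k}$, with eigenspaces $V_0,\dots,V_k$, and let $N=\binom{n}{k}$. Following Wilson~\cite{Wilson}, we use a real symmetric matrix $M$ in the Bose--Mesner algebra whose $(X,Y)$ entry vanishes whenever $|X\cap Y|\ge s$. Let $\lambda_0,\dots,\lambda_k$ be its eigenvalues, with $\lambda_0$ on the constant vector. If $\A,\B$ are cross-$s$-intersecting, then $\mathbf 1_\A^{T}M\mathbf 1_\B=0$. Write $\mathbf 1_\A=\frac{|\A|}{N}\mathbf 1+u$ and $\mathbf 1_\B=\frac{|\B|}{N}\mathbf 1+v$ with $u,v\perp\mathbf 1$, and set $\mu=\max_{i\ge1}|\lambda_i|$. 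Then
\[
\lambda_0\frac{|\A||\B|}{N}\le \mu\,\|u\|\,\|v\|=\mu\sqrt{\Bigl(|\A|-\tfrac{|\A|^2}{N}\Bigr)\Bigl(|\B|-\tfrac{|\B|^2}{N}\Bigr)}\le \mu\sqrt{|\A||\B|}\Bigl(1-\tfrac{\sqrt{|\A||\B|}}{N}\Bigr).
\]
The last inequality is AM--GM, applied to $(1-a)(1-b)\le(1-\sqrt{ab})^2$ with $a=|\A|/N$, $b=|\B|/N$. Rearranging gives
\[
\sqrt{|\A||\B|}\le \frac{\mu}{\lambda_0+\mu}N .
\]

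\textbf{Step 2: choice of $M$.} We take Wilson's matrix, which is a combination of the inclusion matrices $W_{s-1}^TW_{s-1},\dots$ (equivalently of the disjointness-type matrices). Wilson shows that for $n\ge (s+1)(k-s+1)$ it has $\lambda_i\le 0$ for $i\ge1$, with $\min_i\lambda_i$ attaining
\[
\frac{-\lambda_{\min}}{\lambda_0-\lambda_{\min}}N=\binom{n-s}{k-s}.
\]
For the cross version we need more than nonpositivity. We need $|\lambda_i|\le|\lambda_{\min}|$ for all $i\ge1$, so that $\mu=-\lambda_{\min}$, and Step~1 then gives exactly $|\A||\B|\le\binom{n-s}{k-s}^2$.

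\textbf{Main obstacle.} The hard step is this uniform control of $|\lambda_i|$. Wilson's explicit eigenvalue formulas are alternating sums. The plan is to show that the two extreme eigenvalues are $\lambda_1$ and $\lambda_{s+1}$, and then compare $|\lambda_i|$ for the intermediate and large indices $i$ against $|\lambda_{s+1}|$ using ratio estimates of consecutive terms. These estimates are valid when $n\ge(s+1)(k-s+1)$ and $s\ge3$. If a free parameter in $M$ remains, I would first try to tune it, which is Wilson's one-parameter freedom, to balance the largest positive and most negative eigenvalues.

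\textbf{Case (i), $s=2$.} Here I expect the spectral domination to fail near $n=(s+1)(k-s+1)=3(k-1)$, which is why the hypothesis is the weaker-looking $n\ge3.38k$. The approach is different:
\begin{itemize}
\item Apply simultaneous shifting, which preserves cross-$2$-intersection and the sizes of $\A$ and $\B$, to reduce to shifted families.
\item Split according to whether both families are contained in $2$-stars on the same pair. If so, the bound is immediate.
\item Otherwise, shiftedness forces one family, say $\A$, to contain a set meeting $[2]$ in at most one element. Following the Frankl--Tokushige/Matsumoto--Tokushige method behind Theorem~\ref{thm:MT}, bound $|\B|$ through the structure of $\A$ by counting sets according to their intersection with an initial segment, and then use a Kruskal--Katona-type estimate.
\item Optimising the resulting two-variable inequality should give $|\A||\B|<\binom{n-2}{k-2}^2$. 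The constant $3.38$ would come out as the root of the resulting polynomial inequality.
\end{itemize}
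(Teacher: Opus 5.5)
The paper does not prove this statement; it imports it from Chen--Li--Wu--Zhang ($s=2$) and Zhang--Wu ($s\ge 3$), whose proofs are combinatorial. Your proposal has a genuine gap. The step you call the main obstacle is not just hard: it is false on part of the stated range.

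First, there is a misstatement and a false hope in Step~2. Wilson's theorem is one-sided: it gives $\lambda_i\ge\lambda_1$ for all $i\ge 1$ (his $\Phi$ is positive semidefinite), not $\lambda_i\le 0$. In fact the eigenvalue on $V_{s+1}$ is positive. Also, there is no parameter to tune. In the Bose--Mesner algebra, the admissible $M$ form the span of $A_0,\dots,A_{s-1}$, where $(A_i)_{XY}=1$ iff $|X\cap Y|=i$. The $s$-star attains $\binom{n-s}{k-s}^2$, so every inequality in your Step~1 must be tight for $\A=\B=$ star. The centred indicator of the star has nonzero components in each of $V_1,\dots,V_s$, so tightness forces $\lambda_1=\cdots=\lambda_s=-\mu$. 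These conditions determine $M$ up to a positive scalar, so $M$ must be Wilson's matrix.

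For Wilson's matrix, $\lambda_{s+1}\le|\lambda_1|$ fails near the threshold. Take $s=3$, $k=20$, $n=72=(s+1)(k-s+1)$. Using the Eberlein-polynomial eigenvalues of $A_0,A_1,A_2$, the condition $\lambda_1=\lambda_2=\lambda_3$ gives, up to a positive factor,
\[
\lambda_0=35100,\qquad \lambda_1=\lambda_2=\lambda_3=\lambda_5=-684,\qquad \lambda_4\approx 962 .
\]
The Hoffman ratio $684/35784$ is indeed $\binom{69}{17}/\binom{72}{20}$. But $\mu\ge 962$, so Step~1 can give nothing better than about $1.95\binom{69}{17}^2$.

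The same happens asymptotically. As $k\to\infty$ with $k/n\to p$, one gets $\lambda_j\propto(-q)^jQ(j)$ with $q=p/(1-p)$ and $\deg Q\le s-1$. An $s$-th finite difference then gives $\lambda_{s+1}/|\lambda_1|\to(1+q)^s-1$. This is at most $1$ only when $p\le 1-2^{-1/s}$. At $p=1/(s+1)$ it equals $(1+1/s)^s-1\ge 5/4$. This matches the known limitation of the eigenvalue method for cross-$t$-intersecting families, which only reaches $k/n<1-2^{-1/t}$ (Tokushige). That is why the full range $n\ge(s+1)(k-s+1)$ needed the combinatorial arguments of Zhang--Wu, and why your plan cannot give (ii) as stated.

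Your diagnosis of which case needs a different method is also reversed: the obstruction occurs for every $s\ge2$, not specifically $s=2$. For $s=2$ the $V_3$ obstruction disappears only once $n\gtrsim(2+\sqrt2)k\approx 3.41k$, and the $3.38k$ in (i) goes slightly beyond even that. Finally, your case (i) outline derives no inequality. Saying that ``the constant $3.38$ would come out as the root'' is a hope, not an argument. As it stands, neither case is established.
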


\section{Proof of the cubic theorem}\label{sec:cubic}

\begin{proof}[Proof of Theorem~\ref{thm:cubic}]
If $\F=\varnothing$, then $\Phi_{n,k}(\F)=0$, so assume $\F\neq \varnothing$.

Apply Lemma~\ref{lem:core} and relabel the reduced parameters again by $(n,k,\F)$. The quantity
$n-k$ is unchanged by this reduction. Since the function $x\mapsto (x^3+2x^2-x)/2$ is increasing
for integers $x\ge 1$, the numerical hypothesis remains valid after replacing $k$ by the smaller
value $k-c$. Hence we may assume that
\[
\bigcap_{A\in \F}A=\varnothing.
\]
If the reduction yields $k=0$, then $\F=\{\varnothing\}$ in the reduced problem, so the original
family is a singleton, which is a $k$-star. Thus assume from now on that $k\ge 1$.

If $\G_1=\varnothing$, then Lemma~\ref{lem:tel} gives
\[
\Phi_{n,k}(\F)=\frac{|\F|}{\binom{n}{k}}\le 1.
\]
So assume that $\G_1\neq \varnothing$. Choose $S\in \G_1$. Every member of $\F$ meets $S$, so by the
union bound,
\[
|\F|\le |S|\binom{n-1}{k-1}=k\binom{n-1}{k-1},
\]
and therefore
\[
\frac{|\F|}{\binom{n}{k}}\le \frac{k^2}{n}.
\]

Next fix $t\in \{1,\dots,k-1\}$. If $\G_t$ is trivial, then all members of $\G_t$ contain some fixed
$t$-set $T$. Since the common core of $\F$ is empty, there exists $B\in \F$ missing at least one element
of $T$. Put $D:=B\setminus T$. Then $|D|\le k$. For every $A\in \G_t$, we have $|A\cap B|\ge t$,
while $B$ contains at most $t-1$ elements of $T$, so $A$ must meet $D$. Hence
\[
\G_t\subseteq \bigcup_{x\in D}\{A\in \binom{[n]}{k}:T\cup\{x\}\subseteq A\},
\]
and thus
\[
|\G_t|\le |D|\binom{n-t-1}{k-t-1}\le k\binom{n-t-1}{k-t-1}.
\]
If instead $\G_t$ is nontrivial, then
\[
n\ge \frac{k^3+2k^2+k}{2}>k^2\ge (t+1)(k-t+1),
\]
so Corollary~\ref{cor:coarse} gives
\[
|\G_t|\le (k+1)\binom{n-t-1}{k-t-1}.
\]
Thus in every case,
\[
|\G_t|\le (k+1)\binom{n-t-1}{k-t-1}
\qquad (1\le t\le k-1).
\]

Applying Lemma~\ref{lem:tel}, we obtain
\begin{align*}
\Phi_{n,k}(\F)
&< \frac{k^2}{n}+(k+1)\sum_{t=1}^{k-1}\frac{\binom{n-t-1}{k-t-1}}{\binom{n-t}{k-t}} \\
&= \frac{k^2}{n}+(k+1)\sum_{t=1}^{k-1}\frac{k-t}{n-t} \\
&< \frac{k^2}{n-k}+\frac{k+1}{n-k}\sum_{t=1}^{k-1}(k-t) \\
&= \frac{k^2}{n-k}+\frac{(k+1)k(k-1)}{2(n-k)} \\
&= \frac{k^3+2k^2-k}{2(n-k)} \\
&\le 1.
\end{align*}
This proves the upper bound.

The same argument also yields the equality cases. Equality is impossible when $\G_1\neq \varnothing$,
so equality can occur only when $\G_1=\varnothing$. In that case,
\[
\Phi_{n,k}(\F)=\frac{|\F|}{\binom{n}{k}},
\]
so equality forces $|\F|=\binom{n}{k}$ in the reduced problem. Hence the reduced family is the full
level and so the original family is a $c$-star.
\end{proof}

\section{Proof of the quadratic theorem}\label{sec:quadratic}

\begin{lemma}\label{lem:layer}
Assume that $\bigcap_{A\in \F}A=\varnothing$. Fix $t$ with $1\le t\le k-1$, and assume that
\[
n>(t+1)(k-t+1).
\]
Then
\[
\frac{|\G_t|}{\binom{n-t}{k-t}}
\le
\max\!\left\{
1-\frac{(n-k)\bigl((n-k)-t(k-t)-1\bigr)}{(n-t)(n-t-1)},\
1-\frac{\binom{n-k-1}{k-t}}{\binom{n-t}{k-t}}+\frac{t}{\binom{n-t}{k-t}}
\right\}.
\]
\end{lemma}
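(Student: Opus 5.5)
The plan is to split according to whether $\G_t$ is a trivial or nontrivial $t$-intersecting family. Each case will produce one of the two terms in the maximum: the trivial case the first term, and the nontrivial case (through Theorem~\ref{thm:AK}) the second term or something dominated by the first.

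\emph{Trivial case.} Suppose every member of $\G_t$ contains a fixed $t$-set $T$. Since the common core of $\F$ is empty, there is some $B\in\F$ with $T\not\subseteq B$. For $A\in\G_t$ we have $|A\cap B|\ge t$, while $|B\cap T|\le t-1$, so $A$ must meet $D:=B\setminus T$. Write $d:=|D|=k-|B\cap T|\ge k-t+1$. Then $\G_t$ misses every $k$-set that contains $T$ and avoids $D$, which gives
\[
|\G_t|\le\binom{n-t}{k-t}-\binom{n-t-d}{k-t}\le \binom{n-t}{k-t}-\binom{n-k-1}{k-t},
\]
because $d\le k+1-?$. To control $d$ I would argue more carefully: $|A\cap B|\ge t$ with $A\supseteq T$ means $|A\cap D|\ge t-|B\cap T|$, and taking $|B\cap T|=t-1$ in the worst case gives $d=k-t+1$. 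Choosing $B$ so that $|B\cap T|$ is maximal should keep $d\le k$ in general. In any case the resulting ratio is $1-\binom{n-t-d}{k-t}/\binom{n-t}{k-t}$. I would then lower-bound $\binom{n-t-d}{k-t}/\binom{n-t}{k-t}=\prod_{i=0}^{k-t-1}\frac{n-t-d-i}{n-t-i}$ by a Bonferroni/Weierstrass-type product inequality, aiming at the first expression
\[
\frac{(n-k)\bigl((n-k)-t(k-t)-1\bigr)}{(n-t)(n-t-1)} .
\]
Alternatively, I would avoid $d$ altogether and bound the $k$-sets containing $T$ that meet $D$ by inclusion--exclusion truncated at second order, then simplify. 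This yields an expression of the form $1-\frac{(n-k)(n-k-t(k-t)-1)}{(n-t)(n-t-1)}$.

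\emph{Nontrivial case.} Since $n>(t+1)(k-t+1)$, Theorem~\ref{thm:AK} gives $|\G_t|\le\max\{|H_1|,|H_2|\}$. The $H_2$ term divided by $\binom{n-t}{k-t}$ is exactly the second expression in the statement. For $H_1$, I would use the exact formula $(t+2)\binom{n-t-2}{k-t-1}+\binom{n-t-2}{k-t-2}$, divide by $\binom{n-t}{k-t}$, and obtain
\[
\frac{(k-t)\bigl((t+2)(n-k)+(k-t-1)\bigr)}{(n-t)(n-t-1)}.
\]
It then remains to check the algebraic identity or inequality that this is at most $1-\frac{(n-k)(n-k-t(k-t)-1)}{(n-t)(n-t-1)}$. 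Equivalently, after clearing denominators,
\[
(k-t)(t+2)(n-k)+(k-t)(k-t-1)+(n-k)(n-k-t(k-t)-1)\le (n-t)(n-t-1).
\]
Writing $n-t=(n-k)+(k-t)$, the right side expands to $(n-k)^2+2(n-k)(k-t)+(k-t)^2-(n-k)-(k-t)$. Comparing terms, the two sides appear to agree exactly, so the first term is $|H_1|/\binom{n-t}{k-t}$ precisely.

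The main obstacle is the trivial case. There I must show that the bound obtained from a single witness $B$ is dominated by one of the two expressions. The hope is that it is dominated by the $H_2$-type expression: with $d\ge k-t+1$ the excluded count is at least $\binom{n-k-1}{k-t}$, which matches $H_2$ without the $+t$. The subtle point is that the direction is favourable: a smaller $d$ means fewer sets are excluded, but $d\ge k-t+1$ always holds, so the trivial case is bounded by the second term. This resolves it, and the first term is needed only for $H_1$.
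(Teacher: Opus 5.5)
Your nontrivial case is the paper's argument: Theorem~\ref{thm:AK} gives the bound, $|H_2|/\binom{n-t}{k-t}$ is the second entry, and $|H_1|/\binom{n-t}{k-t}$ equals the first entry (your identity check is correct). The gap is in the trivial case. The only sets you actually exclude are the $k$-sets containing $T$ and avoiding $D=B\setminus T$. There are $\binom{n-t-d}{k-t}$ of them, with $d=k-|B\cap T|\ge k-t+1$. This count is \emph{decreasing} in $d$, so $d\ge k-t+1$ yields $\binom{n-t-d}{k-t}\le\binom{n-k-1}{k-t}$, which is the wrong inequality. Your closing claim that ``a smaller $d$ means fewer sets are excluded'' is therefore backwards for the exclusion you constructed.

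You also cannot choose $B$ with $|B\cap T|=t-1$, since every member of $\F$ not containing $T$ may be disjoint from $T$. For instance, with $t=2$ and $B\cap T=\varnothing$, your bound $\binom{n-2}{k-2}-\binom{n-k-2}{k-2}$ exceeds the second entry (in absolute terms) by $\binom{n-k-2}{k-3}-2$. For $k\ge 5$ and $n$ large it also exceeds $|H_1|$, so the lemma does not follow.

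Your initial idea of sending the trivial case to the first entry via Bonferroni cannot work either. For $t=1$, take $\{A\ni x: A\cap B\neq\varnothing\}\cup\{B,B'\}$ with $B\cap B'=\varnothing$ and $x\in B'$. This family has empty core and a trivial $\G_1$ with ratio $1-\binom{n-k-1}{k-1}/\binom{n-1}{k-1}$. For fixed $k\ge4$ and large $n$ this is about $k(k-1)/n$, well above the first entry, which is about $3(k-1)/n$.

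The missing idea is to use the full requirement $|A\cap(B\setminus T)|\ge t-r$, where $r=|B\cap T|$, rather than only $A\cap D\neq\varnothing$. As in the paper, choose $E\subseteq B\setminus T$ with $|E|=k-t+1$; this is possible since $|B\setminus T|=k-r\ge k-t+1$. If $A\supseteq T$ and $A\cap E=\varnothing$, then $|A\cap B|\le r+\bigl((k-r)-(k-t+1)\bigr)=t-1$, so $A\notin\G_t$. There are $\binom{n-k-1}{k-t}$ such $A$, hence $|\G_t|\le\binom{n-t}{k-t}-\binom{n-k-1}{k-t}$ for every value of $r$, which is below the second entry. This is exactly the monotonicity (``the worst case is $|B\cap T|=t-1$'') that you asserted without proof. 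With it in place, your outline coincides with the paper's proof.
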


\begin{proof}
If $\G_t=\varnothing$, there is nothing to prove.

Assume first that $\G_t$ is nontrivial. Then Theorem~\ref{thm:AK} gives
\[
|\G_t|\le
\max\!\left\{
(t+2)\binom{n-t-2}{k-t-1}+\binom{n-t-2}{k-t-2},\
\binom{n-t}{k-t}-\binom{n-k-1}{k-t}+t
\right\}.
\]
Dividing by $\binom{n-t}{k-t}$ yields the second entry of the maximum immediately. For the first
entry, note that
\[
\frac{\binom{n-t-2}{k-t-1}}{\binom{n-t}{k-t}}
=
\frac{(k-t)(n-k)}{(n-t)(n-t-1)},
\qquad
\frac{\binom{n-t-2}{k-t-2}}{\binom{n-t}{k-t}}
=
\frac{(k-t)(k-t-1)}{(n-t)(n-t-1)}.
\]
Hence
\begin{align*}
\frac{(t+2)\binom{n-t-2}{k-t-1}+\binom{n-t-2}{k-t-2}}{\binom{n-t}{k-t}} 
&=
\frac{(t+2)(k-t)(n-k)+(k-t)(k-t-1)}{(n-t)(n-t-1)} \\
&=
1-\frac{(n-k)\bigl((n-k)-t(k-t)-1\bigr)}{(n-t)(n-t-1)}.
\end{align*}
This proves the lemma when $\G_t$ is nontrivial.

It remains to treat the case when $\G_t$ is trivial. Then there is a fixed $t$-set $T$ contained in
every member of $\G_t$. Since the common core of $\F$ is empty, we can choose $B\in \F$ with
$T\not\subseteq B$. Write $r:=|B\cap T|\le t-1$. Every $A\in \G_t$ contains $T$ and satisfies
$|A\cap B|\ge t$, so $A\setminus T$ must contain at least $t-r$ elements of $B\setminus T$.

Choose a set $E\subseteq B\setminus T$ of size $k-t+1$; this is possible because
$|B\setminus T|=k-r\ge k-t+1$. If $X\subseteq [n]\setminus T$ has size $k-t$ and is disjoint from $E$, then
\[
|(B\setminus T)\setminus E|=(k-r)-(k-t+1)=t-r-1,
\]
so $X$ contains at most $t-r-1$ elements of $B\setminus T$. Therefore $T\cup X\notin \G_t$.
The number of such forbidden $X$ is
\[
\binom{n-t-(k-t+1)}{k-t}=\binom{n-k-1}{k-t}.
\]
Hence
\[
|\G_t|\le \binom{n-t}{k-t}-\binom{n-k-1}{k-t},
\]
which is stronger than the second entry of the maximum.
\end{proof}

\begin{proposition}\label{prop:large}
There exists an integer $k^*$ such that the following holds. Let $k\ge k^*$, and let
$\F\subseteq \binom{[n]}{k}$ satisfy
\[
\bigcap_{A\in \F}A=\varnothing,
\qquad
\G_1\neq\varnothing,
\qquad
n-k\ge \frac{k^2}{4}+4k.
\]
Then
\[
\Phi_{n,k}(\F)<1.
\]
\end{proposition}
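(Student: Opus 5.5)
We will start from the exact identity in Lemma~\ref{lem:tel},
\[
\Phi_{n,k}(\F)=\frac{|\F|}{\binom{n}{k}}+\sum_{t=1}^{k-1}\frac{n-k}{n-t+1}\cdot\frac{|\G_t|}{\binom{n-t}{k-t}},
\]
and bound each layer separately. Write $N:=n-k\ge k^2/4+4k$.

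\textbf{The bottom term.} Since $\G_1\neq\varnothing$, the union bound from the proof of Theorem~\ref{thm:cubic} gives $|\F|/\binom{n}{k}\le k^2/n$.

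\textbf{Small $t$.} For $t$ with $t(k-t)\le N-1$, the first entry of the maximum in Lemma~\ref{lem:layer} is strictly less than $1$. Note that $(t+1)(k-t+1)<n$ holds for all $t$ in this range, since $n\ge k+k^2/4+4k$. The second entry, $1-\binom{n-k-1}{k-t}/\binom{n-t}{k-t}+t/\binom{n-t}{k-t}$, is also well below $1$ when $N$ is large compared with $k-t$.

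We will not use the crude bound $\le 1$ per layer, because there are about $k$ layers. Instead, we first use the weight $\frac{n-k}{n-t+1}=1-\frac{k-t+1}{n-t+1}$. Then we use the deficits:
\[
1-\text{(first entry)}=\frac{N(N-t(k-t)-1)}{(n-t)(n-t-1)},
\]
and similarly for the second entry, where $\binom{n-k-1}{k-t}/\binom{n-t}{k-t}\approx e^{-(k-t)^2/N}$.

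\textbf{Concentration on the extreme layers.} The key point is that $|\G_t|$ can be close to $\binom{n-t}{k-t}$ for at most a few $t$. Intuitively, a large $\G_t$ is close to a $t$-star, and a large $\G_{t'}$ with $t'<t$ contains $\G_t$. Rather than pursuing this structurally, I would get the decay numerically by using a different bound for most layers.

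For $t\le k-\sqrt{N}\,\omega$, use the trivial/nontrivial dichotomy as in the cubic proof. If $\G_t$ is trivial with kernel $T$, there is $B\in\F$ with $T\not\subseteq B$. Hence each member of $\G_t$ contains $T$ together with at least one element of $B\setminus T$. This gives the fraction
\[
\frac{|\G_t|}{\binom{n-t}{k-t}}\le 1-\frac{\binom{n-k-1}{k-t}}{\binom{n-t}{k-t}}\approx 1-e^{-(k-t)^2/N}.
\]
That is close to $1$ when $k-t\gg\sqrt N$, so it does not help there.

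\textbf{Better bound for low layers.} For these layers a better bound is available: $|\G_t|\le|\G_1|$, and $\G_1$ is intersecting and nontrivial. By Theorem~\ref{thm:AK} (or Hilton--Milner, i.e.\ $t=1$), $|\G_1|\le k\binom{n-2}{k-2}$-ish. So
\[
\frac{|\G_t|}{\binom{n-t}{k-t}}\le\frac{|\G_1|}{\binom{n-t}{k-t}}.
\]
This ratio is small only when $t$ is small, so the plan is to interpolate. For each $t$, take the minimum of three bounds:
\begin{enumerate}
\item[(a)] Lemma~\ref{lem:layer};
\item[(b)] the monotonicity $|\G_t|\le|\G_s|$ for $s<t$, combined with Lemma~\ref{lem:layer} at level $s$;
\item[(c)] for $t$ near $k$, the direct bound $(k+1)\binom{n-t-1}{k-t-1}/\binom{n-t}{k-t}\le (k+1)(k-t)/(n-t)$ from Corollary~\ref{cor:coarse} and the cubic proof.
\end{enumerate}
Bound (c) is summable over $k-t\le m$, contributing about $(k+1)m^2/(2N)$.

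\textbf{Main obstacle: the middle range.} The hard step is the middle range $t\approx k/2$. There $t(k-t)\approx k^2/4\approx N$, so the first entry of Lemma~\ref{lem:layer} approaches $1$, and this is exactly where the sharpness example $J_t$ lives. I would handle it in two steps.

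First, split the layers into $t$ with $|\G_t|$ nontrivial and $t$ with $|\G_t|$ trivial. The nontrivial layers form an initial segment: if $\G_t$ is trivial, then every $\G_{t'}\subseteq\G_t$ for $t'>t$ has a common $t$-set and hence is trivial or empty.

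Second, observe that if $\G_s$ is nontrivial and $N\ge k^2/4+4k$, then $N-s(k-s)-1\ge 4k-1$. This gives a deficit of order $Nk/n^2\approx k/N\approx 4/k$ per layer. It is not enough per layer, but the weights $1-\frac{k-t+1}{n-t+1}$ supply an extra deficit of about $(k-t)/N$.

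Summing $\sum_t\bigl[1-\text{deficit}_t\bigr]$ against the total budget still gives about $k$, which is far above $1$. So the decisive step must be to show that $\G_t$ is empty or tiny for all but $O(1)$ values of $t$. This follows because $\G_t\ne\varnothing$ forces a set $S$ with $|S\cap B|\ge t$ for all $B\in\F$, and this caps $|\F|$, and hence $|\G_{t'}|$ for $t'<t$, by roughly $\binom{k}{t}\binom{n-t}{k-t}$-type union bounds relative to $\binom{n-t'}{k-t'}$. Making this trade-off sum to less than $1$ with the constant $4k$, for $k\ge k^*$, is the calculation I expect to be the main difficulty.
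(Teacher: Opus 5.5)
Your proposal has a genuine gap at exactly the step you flag as the main difficulty, and the mechanism you suggest for that step does not work.

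Near $t\approx k/2$, a single layer really can have normalized size $x_t:=|\G_t|/\binom{n-t}{k-t}=1-O(1/k)$ under the hypotheses. Take $\F=J_t(n,k)$, for which $\G_t=\F$. The same holds, in different families, for several consecutive values of $t$. So no combination of single-layer bounds such as your (a)--(c), even with the weights $\frac{n-k}{n-t+1}$, can bring the sum below $1$. You need a joint constraint forcing at most one layer to be large and the rest to be very small.

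The union bound does not give this. From $S\in\G_t$ you get $|\F|\le\binom{k}{t}\binom{n-t}{k-t}$, hence $x_{t'}\le\binom{k}{t}\prod_{r=0}^{t-t'-1}\frac{k-t'-r}{n-t'-r}$, where each factor is of order $1/k$. For $t\approx k/2$ the prefactor $\binom{k}{t}\approx 2^k/\sqrt{k}$ is only beaten once $t-t'\gtrsim k\log 2/\log k$. Roughly $k/\log k$ layers therefore stay uncontrolled, and even two near-full layers would already sum to nearly $2$.

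There are two smaller problems. First, your bottom-term bound $k^2/n$ is about $4$ in this range of $n$, so it is useless. Second, the claim that the nontrivial layers form an initial segment is false: a family all of whose members contain a fixed $t$-set can still be a nontrivial $(t+1)$-intersecting family.

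The missing ingredient is the pair of cross-intersecting product theorems, Theorems~\ref{thm:MT} and~\ref{thm:cross}. For $s<t$ the layers $\G_s$ and $\G_t$ are cross-$t$-intersecting, so $|\G_s|\,|\G_t|\le\binom{n-t}{k-t}^2$. With $\rho:=(k-1)/(n-k+1)\le 4/k$ this gives $x_sx_t\le\rho^{t-s}$. If $M=x_m$ is the largest layer, then $x_{m\pm d}\le\min\{M,\rho^d/M\}$. Hence only the maximizing layer can be large, and the others decay geometrically. Similarly, $\F$ and $\G_m$ are cross-$m$-intersecting, which gives $M\,|\F|/\binom{n}{k}\le k/n$.

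The paper combines this with $|\F|/\binom{n}{k}\le 1-\binom{n-k}{k}/\binom{n}{k}\le 1-e^{-9}$ and splits into cases on $M$:
\begin{itemize}
\item If $M\le\sqrt{\rho}$, the layers add only $O(k^{-1/2})$ to the bottom term.
\item If $\sqrt{\rho}<M\le 1-e^{-9}/2$, the total is at most $M+O(k^{-1/2})$.
\item If $M>1-e^{-9}/2$, then $M$ cannot come from the trivial/$H_2$ entry of Lemma~\ref{lem:layer}, so it comes from the $H_1$ entry. That entry's deficit is at least $(16+o(1))/k$ when $n-k\ge k^2/4+4k$. This beats the remaining mass, which is at most $(12+o(1))/\bigl((1-e^{-9}/2)k\bigr)$.
\end{itemize}
That last comparison is where the constant $4k$ is actually used.
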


\begin{proof}

Since $n\ge 3.38k$ and
\[
(s+1)(k-s+1)\le \frac{(k+2)^2}{4}<\frac{k^2}{4}+5k\le n,
\]
 Theorems~\ref{thm:AK}, \ref{thm:MT}, and~\ref{thm:cross} apply in every place where we will use them.

Choose $m\in\{1,\dots,k-1\}$ such that
\[
M:=\frac{|\G_m|}{\binom{n-m}{k-m}}=
\max_{1\le t\le k-1}\frac{|\G_t|}{\binom{n-t}{k-t}},
\]
and write
\[
\rho:=\frac{k-1}{n-k+1}.
\]
Since $n-k\ge k^2/4+4k$, we have $\rho\le 4/k$.

If $d\ge 1$ and $m+d\le k-1$, then $\G_m$ and $\G_{m+d}$ are cross-$(m+d)$-intersecting, so
\[
|\G_m|\,|\G_{m+d}|\le \binom{n-m-d}{k-m-d}^2.
\]
Dividing by $\binom{n-m}{k-m}\binom{n-m-d}{k-m-d}$ gives
\[
M\cdot \frac{|\G_{m+d}|}{\binom{n-m-d}{k-m-d}}
\le
\frac{\binom{n-m-d}{k-m-d}}{\binom{n-m}{k-m}}
=
\prod_{r=0}^{d-1}\frac{k-m-r}{n-m-r}.
\]
For each $r\in\{0,\dots,d-1\}$, the numerator is at most $k-1$ and the denominator is at least
$n-k+1$, so every factor is at most $\rho$. Hence the product is at most $\rho^d$.
Similarly, if $d\ge 1$ and $m-d\ge 1$, then $\G_{m-d}$ and $\G_m$ are cross-$m$-intersecting, so
\[
\frac{|\G_{m-d}|}{\binom{n-m+d}{k-m+d}}\cdot M
\le
\frac{\binom{n-m}{k-m}}{\binom{n-m+d}{k-m+d}}
=
\prod_{r=1}^{d}\frac{k-m+r}{n-m+r}.
\]
Here again each numerator is at most $k-1$ and each denominator is at least $n-k+1$, so every
factor is at most $\rho$. Hence this product is at most $\rho^d$.
Therefore, for every $d$,
\[
\frac{|\G_{m\pm d}|}{\binom{n-m\mp d}{k-m\mp d}}
\le
\min\!\left\{M,\frac{\rho^d}{M}\right\}
\le \rho^{d/2}.
\]
 The families $\F$ and $\G_m$ are cross-$m$-intersecting. By Theorems~\ref{thm:MT} and~\ref{thm:cross}, 
$|\F|\,|\G_m|\le \binom{n-m}{k-m}^2$. Thus,
\[
\frac{|\F|}{\binom{n}{k}}\,M
\le
\frac{\binom{n-m}{k-m}}{\binom{n}{k}}
\le \frac{k}{n}.
\]
Choose $S\in \G_1$. Every member of $\F$ meets $S$, so all $k$-sets disjoint from $S$ are excluded. Thus
\[
\frac{|\F|}{\binom{n}{k}}
\le 1-\frac{\binom{n-k}{k}}{\binom{n}{k}}.
\]
For every $1\le t\le k-1$, Lemma~\ref{lem:layer} gives
\[
\frac{|\G_t|}{\binom{n-t}{k-t}}
\le
\max\!\left\{
1-\frac{(n-k)\bigl((n-k)-t(k-t)-1\bigr)}{(n-t)(n-t-1)},\
1-\frac{\binom{n-k-1}{k-t}}{\binom{n-t}{k-t}}+\frac{t}{\binom{n-t}{k-t}}
\right\}.
\]

We claim that for all sufficiently large $k$,
\[
\frac{|\F|}{\binom{n}{k}}\le 1-e^{-9},
\qquad
1-\frac{\binom{n-k-1}{k-t}}{\binom{n-t}{k-t}}+\frac{t}{\binom{n-t}{k-t}}\le 1-\frac{e^{-9}}{2}
\quad (1\le t\le k-1).
\]
Indeed, write $y:=n-k$. Then $y\ge k^2/4+4k$, and
\[
\frac{n-2k}{n}=\frac{y-k}{y+k}.
\]
The function $x\mapsto (x-k)/(x+k)$ is increasing, so
\[
\frac{n-2k}{n}\ge
\frac{\frac{k^2}{4}+3k}{\frac{k^2}{4}+5k}.
\]
Since the $k$th power of the right-hand side tends to $e^{-8}$, we have
\[
\left(\frac{n-2k}{n}\right)^k\ge e^{-9}
\]
for all sufficiently large $k$.
Now
\[
\frac{\binom{n-k}{k}}{\binom{n}{k}}
=
\prod_{j=0}^{k-1}\frac{n-k-j}{n-j}
\ge
\left(\frac{n-2k}{n}\right)^k,
\]
so $|\F|/\binom{n}{k}\le 1-e^{-9}$.
Also,
\[
\frac{\binom{n-k-1}{k-t}}{\binom{n-t}{k-t}}
=
\prod_{i=1}^{k-t}\frac{n-k-i}{n-k+i},
\]
and every factor on the right is at least $(n-2k)/n$, so
\[
\frac{\binom{n-k-1}{k-t}}{\binom{n-t}{k-t}}
\ge
\left(\frac{n-2k}{n}\right)^{k-t}
\ge
\left(\frac{n-2k}{n}\right)^k
\ge e^{-9}.
\]
Finally, if $k-t=1$, then
\[
\frac{t}{\binom{n-t}{k-t}}=\frac{t}{n-t}\le \frac{k}{n-k+1},
\]
and if $k-t\ge 2$, then
\[
\frac{t}{\binom{n-t}{k-t}}
\le
\frac{k}{\binom{n-t}{2}}
\le
\frac{k}{\binom{n-k+2}{2}}.
\]
Both bounds tend to $0$ as $k\to\infty$, so
\[
\frac{t}{\binom{n-t}{k-t}}\le \frac{e^{-9}}{2}
\]
for every $t$ once $k$ is large enough. This proves the claim.

We consider three cases.

\medskip
\noindent\emph{Case 1: $M\le \sqrt{\rho}$.}
Then
\[
\sum_{t=1}^{k-1}\frac{|\G_t|}{\binom{n-t}{k-t}}
\le M+2\sum_{d\ge 1}\rho^{d/2}
\le \sqrt{\rho}+\frac{2\sqrt{\rho}}{1-\sqrt{\rho}}.
\]
Since $\rho\le 4/k$, the right-hand side tends to $0$. By Lemma~\ref{lem:tel},
\[
\Phi_{n,k}(\F)
\le
\frac{|\F|}{\binom{n}{k}}+
\sum_{t=1}^{k-1}\frac{|\G_t|}{\binom{n-t}{k-t}}
\le
1-e^{-9}+\sqrt{\rho}+\frac{2\sqrt{\rho}}{1-\sqrt{\rho}}<1
\]
for all sufficiently large $k$.

\medskip
\noindent\emph{Case 2: $\sqrt{\rho}<M\le 1-\frac{e^{-9}}{2}$.}
Now $\rho^d/M\le M$ for every $d\ge 1$, so
\[
\sum_{t=1}^{k-1}\frac{|\G_t|}{\binom{n-t}{k-t}}
\le M+\frac{2\rho}{M(1-\rho)}.
\]
Combining this with Lemma~\ref{lem:tel} and
$\frac{|\F|}{\binom{n}{k}}M\le \frac{k}{n}$, we get
\[
\Phi_{n,k}(\F)
\le
M+\frac{k}{nM}+\frac{2\rho}{M(1-\rho)}
\le
1-\frac{e^{-9}}{2}+\frac{k}{n\sqrt{\rho}}+\frac{2\sqrt{\rho}}{1-\rho}.
\]
Write $y:=n-k$. Then
\[
\frac{k}{n\sqrt{\rho}}
=
\frac{k}{y+k}\sqrt{\frac{y+1}{k-1}}.
\]
Since $y\ge k^2/4+4k>k-2$, the function $x\mapsto k\sqrt{x+1}/(x+k)$ is decreasing, so
\[
\frac{k}{n\sqrt{\rho}}
\le
\frac{k}{\frac{k^2}{4}+5k}\sqrt{\frac{\frac{k^2}{4}+4k+1}{k-1}}
=O(k^{-1/2}).
\]
Also $\rho\le 4/k$, so
\[
\frac{2\sqrt{\rho}}{1-\rho}=O(k^{-1/2}).
\]
Hence $\Phi_{n,k}(\F)<1$ for all sufficiently large $k$.

\medskip
\noindent\emph{Case 3: $M>1-\frac{e^{-9}}{2}$.}
The maximizing layer cannot come from the second entry of the maximum above, because that
entry is at most $1-\frac{e^{-9}}{2}$ while $M>1-\frac{e^{-9}}{2}$. Therefore
\[
1-M\ge \frac{(n-k)\bigl((n-k)-m(k-m)-1\bigr)}{(n-m)(n-m-1)}.
\]
Write the right-hand side as
\[
\frac{n-k}{n-m}\cdot \frac{(n-k)-m(k-m)-1}{n-m-1}.
\]
Both factors are increasing functions of $n-k$. Set $x_0:=\frac{k^2}{4}+4k$. Then $n-k\ge x_0$, so
\[
\frac{n-k}{n-m}\cdot \frac{(n-k)-m(k-m)-1}{n-m-1}
\ge
\frac{x_0}{x_0+k-m}\cdot \frac{x_0-m(k-m)-1}{x_0+k-m-1}.
\]
Now $k-m\le k$, while $m(k-m)\le k^2/4$, so
\[
\frac{x_0}{x_0+k-m}\ge \frac{x_0}{x_0+k},
\qquad
x_0-m(k-m)-1\ge 4k-1,
\qquad
x_0+k-m-1\le x_0+k-1.
\]
Therefore
\[
1-M\ge
\frac{\frac{k^2}{4}+4k}{\frac{k^2}{4}+5k}
\cdot
\frac{4k-1}{\frac{k^2}{4}+5k-1}.
\]
Now
\[
\frac{\frac{k^2}{4}+4k}{\frac{k^2}{4}+5k}=1+O(k^{-1}),
\qquad
\frac{4k-1}{\frac{k^2}{4}+5k-1}=\frac{16+o(1)}{k},
\]
so
\[
1-M\ge \frac{16+o(1)}{k}.
\]
On the other hand, using $M>1-\frac{e^{-9}}{2}$, we obtain
\[
\frac{|\F|}{\binom{n}{k}}+
\sum_{t\ne m}\frac{|\G_t|}{\binom{n-t}{k-t}}
\le
\frac{1}{1-\frac{e^{-9}}{2}}\left(\frac{k}{n}+\frac{2\rho}{1-\rho}\right).
\]
Now
\[
\frac{k}{n}\le \frac{k}{\frac{k^2}{4}+5k}=\frac{4+o(1)}{k},
\qquad
\frac{2\rho}{1-\rho}=\frac{2(k-1)}{n-2k+2}
\le \frac{2(k-1)}{\frac{k^2}{4}+3k+2}=\frac{8+o(1)}{k}.
\]
Therefore
\[
\frac{|\F|}{\binom{n}{k}}+
\sum_{t\ne m}\frac{|\G_t|}{\binom{n-t}{k-t}}
\le
\frac{12+o(1)}{(1-\frac{e^{-9}}{2})k}.
\]
Since
\[
\frac{12}{1-\frac{e^{-9}}{2}}<16,
\]
the deficit $1-M$ is eventually larger than this error term. Therefore
\[
\Phi_{n,k}(\F)
\le
M+\frac{|\F|}{\binom{n}{k}}+
\sum_{t\ne m}\frac{|\G_t|}{\binom{n-t}{k-t}}<1.
\]

The three cases prove the proposition.
\end{proof}

\begin{lemma}\label{lem:fixed}
For each integer $r\ge 1$ there exists an integer $N_r$ such that the following holds.
Let $\F\subseteq \binom{[n]}{r}$ satisfy
\[
\bigcap_{A\in \F}A=\varnothing,
\qquad
\G_1\neq\varnothing,
\qquad
n-r\ge N_r.
\]
Then
\[
\Phi_{n,r}(\F)<1.
\]
\end{lemma}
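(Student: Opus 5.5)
Fix $r\ge 1$ and let $n\to\infty$ with $r$ fixed. The plan is to show that each term in the bound of Lemma~\ref{lem:tel} separately tends to a limit whose sum is strictly less than $1$. We will prove
\[
\frac{|\F|}{\binom{n}{r}}\to 0
\qquad\text{and}\qquad
\sum_{t=1}^{r-1}\frac{|\G_t|}{\binom{n-t}{r-t}}\le \max_{t}\frac{|\G_t|}{\binom{n-t}{r-t}}+o(1).
\]
We then show this maximum is bounded away from $1$ by something that beats the $o(1)$ errors.

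\textbf{Step 1.} Since $\G_1\neq\varnothing$, pick $S\in\G_1$. As in the proof of Theorem~\ref{thm:cubic}, $|\F|\le r\binom{n-1}{r-1}$, so $|\F|/\binom{n}{r}\le r^2/n=O(1/n)$.

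\textbf{Step 2 (the layers).} Fix $t\in\{1,\dots,r-1\}$. For $n$ large we have $n>(t+1)(r-t+1)$.
\begin{itemize}
\item If $\G_t$ is trivial, the argument in the proof of Theorem~\ref{thm:cubic} gives $|\G_t|\le r\binom{n-t-1}{r-t-1}$, so $|\G_t|/\binom{n-t}{r-t}=O(1/n)$.
\item If $\G_t$ is nontrivial, Corollary~\ref{cor:coarse} gives $|\G_t|\le (r+1)\binom{n-t-1}{r-t-1}$, again $O(1/n)$.
\end{itemize}
Summing over the fewer than $r$ layers, the whole right-hand side of Lemma~\ref{lem:tel} is at most
\[
\frac{r^2}{n}+\sum_{t=1}^{r-1}(r+1)\frac{r-t}{n-t}=O_r(1/n),
\]
which is below $1$ once $n-r\ge N_r$ for a suitable $N_r$. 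This is exactly the computation in the proof of Theorem~\ref{thm:cubic} with $k=r$ fixed: the final quantity is at most $\frac{r^3+2r^2-r}{2(n-r)}$, which is $<1$ as soon as $n-r>(r^3+2r^2-r)/2$.

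So one may simply take $N_r:=\lceil (r^3+2r^2-r)/2\rceil+1$. The strict inequality in that chain (the strict inequalities were already present there when $\G_1\ne\varnothing$) yields $\Phi_{n,r}(\F)<1$.

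\textbf{Main obstacle.} The only point needing care is verifying that the hypotheses of Corollary~\ref{cor:coarse} hold, namely $n>(t+1)(r-t+1)$. This follows because $n\ge N_r+r>r^2\ge (t+1)(r-t+1)$ for $r\ge 2$; when $r=1$ there are no layers and the bound $|\F|/\binom{n}{1}\le 1/n<1$ is immediate.

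The lemma is thus essentially Theorem~\ref{thm:cubic}'s argument restricted to fixed uniformity. Its role is to handle the finitely many $k<k^*$ not covered by Proposition~\ref{prop:large}, absorbing the $N_r$ into the constant $D$.
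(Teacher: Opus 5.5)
Your proof is correct and has the same skeleton as the paper's: the union bound $|\F|/\binom{n}{r}\le r^2/n$ from a set $S\in\G_1$, an $O_r(1/n)$ bound on each layer $|\G_t|/\binom{n-t}{r-t}$, and then Lemma~\ref{lem:tel}. The only difference is the source of the layer bounds: you use Corollary~\ref{cor:coarse} together with the trivial-star argument from the proof of Theorem~\ref{thm:cubic}, while the paper uses Lemma~\ref{lem:layer} to get $(r+1)^2/(n-r+1)$. Your route gives the explicit choice $N_r=\lceil (r^3+2r^2-r)/2\rceil+1$, and the plan in your opening paragraph about bounding the largest layer away from $1$ is not needed, since every layer is already $O_r(1/n)$.
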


\begin{proof}
Choose $N_r$ so large that whenever $n-r\ge N_r$, we have
\[
n>(t+1)(r-t+1)\qquad (1\le t\le r-1)
\]
and also
\[
\frac{r^2}{n}+\frac{(r-1)(r+1)^2}{n-r+1}<1.
\]
This is possible because $r$ is fixed and the left-hand side tends to $0$ as $n\to\infty$.

Choose $S\in \G_1$. Every member of $\F$ meets $S$, so
\[
\frac{|\F|}{\binom{n}{r}}
\le
r\,\frac{\binom{n-1}{r-1}}{\binom{n}{r}}
=
\frac{r^2}{n}.
\]
Now fix $t$ with $1\le t\le r-1$. Since $n-r\ge N_r$, Lemma~\ref{lem:layer} applies.
For the first entry of the maximum,
\begin{align*}
1-\frac{(n-r)\bigl((n-r)-t(r-t)-1\bigr)}{(n-t)(n-t-1)}
&=
\frac{(t+2)(r-t)(n-r)+(r-t)(r-t-1)}{(n-t)(n-t-1)} \\
&\le
\frac{(t+2)(r-t)}{n-r+1}+\frac{r-t}{n-r+1} \\
&\le \frac{(r+1)^2}{n-r+1}.
\end{align*}
For the second entry, using $1-\prod(1-a_i)\le \sum a_i$ for $0\le a_i\le 1$,
\begin{align*}
1-\frac{\binom{n-r-1}{r-t}}{\binom{n-t}{r-t}}+\frac{t}{\binom{n-t}{r-t}}
&\le
\sum_{i=1}^{r-t}\frac{2i}{n-r+i}+\frac{r}{n-r+1} \\
&\le
\frac{(r-t)(r-t+1)}{n-r+1}+\frac{r}{n-r+1} \\
&\le \frac{(r+1)^2}{n-r+1}.
\end{align*}
Hence
\[
\frac{|\G_t|}{\binom{n-t}{r-t}}\le \frac{(r+1)^2}{n-r+1}
\qquad (1\le t\le r-1).
\]
Applying Lemma~\ref{lem:tel},
\[
\Phi_{n,r}(\F)
\le
\frac{|\F|}{\binom{n}{r}}+
\sum_{t=1}^{r-1}\frac{|\G_t|}{\binom{n-t}{r-t}}<1
\]
by the choice of $N_r$.
\end{proof}

\begin{proof}[Proof of Theorem~\ref{thm:quadratic}]
Let $k^*$ be given by Proposition~\ref{prop:large}. For each $1\le r<k^*$, let $N_r$ be given by
Lemma~\ref{lem:fixed}, and define
\[
D:=\max_{1\le r<k^*}\left(N_r-\frac{r^2}{4}-4r\right).
\]
Increasing $D$ if necessary, we may also assume that $D\ge 0$.

Now let $\F\subseteq \binom{[n]}{k}$ and assume that
\[
n-k\ge \frac{k^2}{4}+4k+D.
\]
If $\F=\varnothing$, then $\Phi_{n,k}(\F)=0$, so there is nothing to prove.

Apply Lemma~\ref{lem:core}. Thus we may remove the common core and replace $(n,k,\F)$ by
$(n',k',\F')$ such that
\[
\Phi_{n,k}(\F)=\Phi_{n',k'}(\F'),
\qquad
n'-k'=n-k,
\qquad
\bigcap_{A\in \F'}A=\varnothing.
\]

We first prove the bound $\Phi_{n,k}(\F)\le 1$.
If $k'=0$, then $\F'=\{\varnothing\}$ and $\Phi_{n',0}(\F')=1$.
Assume from now on that $k'\ge 1$.

If $\G'_1=\varnothing$, then $i_{\F'}(A)=0$ for every $A\in \F'$, and hence
\[
\Phi_{n',k'}(\F')=\frac{|\F'|}{\binom{n'}{k'}}\le 1.
\]
So only the case $\G'_1\neq\varnothing$ remains.

If $k'\ge k^*$, then
\[
n'-k'=n-k\ge \frac{k^2}{4}+4k+D\ge \frac{(k')^2}{4}+4k',
\]
so Proposition~\ref{prop:large} gives
\[
\Phi_{n',k'}(\F')<1.
\]
If instead $1\le k'<k^*$, then by the definition of $D$,
\[
n'-k'=n-k\ge \frac{k^2}{4}+4k+D\ge \frac{(k')^2}{4}+4k'+D\ge N_{k'},
\]
so Lemma~\ref{lem:fixed} gives
\[
\Phi_{n',k'}(\F')<1.
\]
Thus $\Phi_{n,k}(\F)\le 1$ in all cases.

We now determine the equality cases. Every case with $\G'_1\neq\varnothing$ gives a strict inequality,
so equality can occur only if $k'=0$ or $\G'_1=\varnothing$.

If $k'=0$, then the original family consists of one $k$-set, so it is a $k$-star.
Assume now that $k'\ge 1$ and $\G'_1=\varnothing$. Then
\[
\Phi_{n',k'}(\F')=\frac{|\F'|}{\binom{n'}{k'}},
\]
so equality holds if and only if
\[
|\F'|=\binom{n'}{k'},
\qquad\text{that is,}\qquad
\F'=\binom{[n']}{k'},
\]
and thus $\F$ is a $c$-star.
\end{proof}

\section{Quadratic-order counterexamples and sharpness}\label{sec:sharpness}

Recall the Ahlswede--Khachatrian families~\cite{AK}:
\[
J_t(n,k):=\{A\in \binom{[n]}{k}:|A\cap [t+2]|\ge t+1\}.
\]
We have
\[
|J_t(n,k)|=(t+2)\binom{n-t-2}{k-t-1}+\binom{n-t-2}{k-t-2},
\]
and comparing this with
\[
\binom{n-t}{k-t}
=
\binom{n-t-2}{k-t}+2\binom{n-t-2}{k-t-1}+\binom{n-t-2}{k-t-2}
\]
shows that
\[
|J_t(n,k)|-\binom{n-t}{k-t}
=
\frac{(t+1)(k-t+1)-n}{k-t}\binom{n-t-2}{k-t-1}.
\]
Hence $|J_t(n,k)|>\binom{n-t}{k-t}$ and thus $\Phi_{n,k}(J_t(n,k))>1$ when $n<(t+1)(k-t+1)$.
 The following proposition shows that our localized inequality is violated for some $n$ beyond this range.

\begin{proposition}\label{prop:Jt}
Let $1\le t\le k-2$. Then
\[
\Phi_{n,k}(J_t(n,k))
\ge
1+\frac{(n-k)\bigl((k-t)(t+1)-(n-k)\bigr)}{(n-t)(n-t-1)}.
\]
In particular, if
\[
k<n<k+(k-t)(t+1),
\]
then
\[
\Phi_{n,k}(J_t(n,k))>1.
\]
\end{proposition}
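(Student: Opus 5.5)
The plan is to lower-bound $i_{J}(A)$ for every $A\in J:=J_t(n,k)$, split into two classes, and then use that the weight $1/\binom{n-i}{k-i}$ is increasing in $i$. The monotonicity holds because $\binom{n-i}{k-i}/\binom{n-i-1}{k-i-1}=(n-i)/(k-i)>1$ for $n>k$. Hence replacing each $i_J(A)$ by a lower bound can only decrease $\Phi_{n,k}(J)$.

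\emph{Step 1 (the two classes).} Every $A\in J$ misses at most one point of $[t+2]$. So two members $A,B$ share at least $(t+2)-2=t$ points of $[t+2]$, giving $i_J(A)\ge t$ for all $A$. If moreover $[t+2]\subseteq A$, then any $B\in J$ contains at least $t+1$ points of $[t+2]\subseteq A$, so $i_J(A)\ge t+1$. The number of such $A$ is $\binom{n-t-2}{k-t-2}$, which is a genuine count since $k-t\ge 2$.

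\emph{Step 2 (the bound).} Combining Step 1 with monotonicity,
\[
\Phi_{n,k}(J)\ \ge\ \frac{|J|}{\binom{n-t}{k-t}}+\binom{n-t-2}{k-t-2}\left(\binom{n-t-1}{k-t-1}^{-1}-\binom{n-t}{k-t}^{-1}\right).
\]

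\emph{Step 3 (the first term).} The computation already carried out in the proof of Lemma~\ref{lem:layer}, with $|J|=|H_1(n,k,t)|$, gives
\[
\frac{|J|}{\binom{n-t}{k-t}}=1-\frac{(n-k)\bigl((n-k)-t(k-t)-1\bigr)}{(n-t)(n-t-1)}.
\]

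\emph{Step 4 (the correction term).} Use $\binom{n-t-1}{k-t-1}=\frac{k-t}{n-t}\binom{n-t}{k-t}$ and
\[
\frac{\binom{n-t-2}{k-t-2}}{\binom{n-t}{k-t}}=\frac{(k-t)(k-t-1)}{(n-t)(n-t-1)}.
\]
With these, the correction term equals
\[
\frac{(k-t)(k-t-1)}{(n-t)(n-t-1)}\cdot\frac{n-k}{k-t}=\frac{(n-k)(k-t-1)}{(n-t)(n-t-1)}.
\]

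\emph{Step 5 (combining).} Adding Steps 3 and 4, the numerator becomes
\[
(n-k)\bigl(t(k-t)+1-(n-k)+(k-t-1)\bigr)=(n-k)\bigl((k-t)(t+1)-(n-k)\bigr).
\]
This is exactly the claimed lower bound. For the ``in particular'' part: when $k<n<k+(k-t)(t+1)$, both factors $n-k$ and $(k-t)(t+1)-(n-k)$ are positive, and $(n-t)(n-t-1)>0$. So the excess over $1$ is strictly positive, and $\Phi_{n,k}(J)>1$.

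The only delicate point is Step 1. One has to notice that the $t$-intersecting lower bound alone gives only the weaker quantity $t(k-t)+1$ in place of $(k-t)(t+1)$. The extra $k-t-1$ comes precisely from the sets containing all of $[t+2]$, which therefore must be credited with $i\ge t+1$. Everything else is routine binomial arithmetic.
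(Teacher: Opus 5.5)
Your proof is correct and matches the paper's: you split $J_t(n,k)$ according to whether $[t+2]\subseteq A$, use $i\ge t$ for all members and $i\ge t+1$ for the sets containing $[t+2]$, invoke monotonicity of the weights, and do the same binomial arithmetic. The only differences are bookkeeping and detail. You write the sum as the whole-family term from Lemma~\ref{lem:layer} plus a correction for the sets containing $[t+2]$, whereas the paper weights the two classes separately. You also spell out the two intersection bounds, which the paper leaves implicit.
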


\begin{proof}
Partition $J_t(n,k)$ into the sets of the following two types:
\[
\mathcal{X}:=\{A\in J_t(n,k):[t+2]\subseteq A\},
\qquad
\mathcal{Y}:=J_t(n,k)\setminus \mathcal{X}.
\]
Then
\[
\Phi_{n,k}(J_t(n,k))\ge \frac{|\mathcal{Y}|}{\binom{n-t}{k-t}}+\frac{|\mathcal{X}|}{\binom{n-t-1}{k-t-1}}.
\]
Now
\[
|\mathcal{X}|=\binom{n-t-2}{k-t-2},
\qquad
|\mathcal{Y}|=(t+2)\binom{n-t-2}{k-t-1}.
\]
Using
\[
\frac{\binom{n-t-2}{k-t-2}}{\binom{n-t-1}{k-t-1}}=\frac{k-t-1}{n-t-1},
\qquad
\frac{\binom{n-t-2}{k-t-1}}{\binom{n-t}{k-t}}=\frac{(k-t)(n-k)}{(n-t)(n-t-1)},
\]
we obtain
\[
\Phi_{n,k}(J_t(n,k))
\ge
(t+2)\frac{(k-t)(n-k)}{(n-t)(n-t-1)}+\frac{k-t-1}{n-t-1}.
\]
Simplifying gives the stated bound.
\end{proof}

\begin{corollary}\label{cor:quadratic-sharp}
Let $k\ge 3$ and 
\[
k<n<k+\left\lceil\frac{k}{2}\right\rceil\left(\left\lfloor\frac{k}{2}\right\rfloor+1\right),
\]
then there exists a family $\F\subseteq \binom{[n]}{k}$ such that $\Phi_{n,k}(\F)>1$. 
\end{corollary}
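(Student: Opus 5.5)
We will deduce the corollary directly from Proposition~\ref{prop:Jt} by taking $t:=\lfloor k/2\rfloor$ and $\F:=J_t(n,k)$.

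First we check that $t$ is admissible, i.e.\ $1\le t\le k-2$. Since $k\ge 3$, we have $t=\lfloor k/2\rfloor\ge 1$. For the upper bound, $\lfloor k/2\rfloor\le k-2$ is equivalent to $k-\lfloor k/2\rfloor=\lceil k/2\rceil\ge 2$, which holds for every $k\ge 3$.

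Next we match the range of $n$. With this choice of $t$ we have $k-t=\lceil k/2\rceil$ and $t+1=\lfloor k/2\rfloor+1$. Hence the hypothesis
\[
k<n<k+\left\lceil\frac{k}{2}\right\rceil\left(\left\lfloor\frac{k}{2}\right\rfloor+1\right)
\]
is exactly the condition $k<n<k+(k-t)(t+1)$ of Proposition~\ref{prop:Jt}.

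Applying the proposition gives $\Phi_{n,k}(J_t(n,k))>1$. This uses only that $n>k$, which guarantees that every denominator $\binom{n-i}{k-i}$ is positive and that the factor $(n-k)$ in the bound is positive. So $\F=J_t(n,k)$ is the required family.

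There is no real obstacle here beyond the parity bookkeeping in the identification $k-\lfloor k/2\rfloor=\lceil k/2\rceil$. One could also remark that this choice of $t$ essentially maximizes $(k-t)(t+1)$, which yields the $k^2/4+3k/2+O(1)$ threshold mentioned in the introduction, but the corollary does not need that.
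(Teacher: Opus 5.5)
Your proof is correct and is essentially the paper's argument: set $t=\lfloor k/2\rfloor$, note that $(k-t)(t+1)=\lceil k/2\rceil(\lfloor k/2\rfloor+1)$, and apply Proposition~\ref{prop:Jt} with $\F=J_t(n,k)$. Your explicit check that $1\le t\le k-2$ holds whenever $k\ge 3$ is a detail the paper leaves implicit.
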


\begin{proof}
For $t=\lfloor k/2\rfloor$, we have
\[
(k-t)(t+1)=\left\lceil\frac{k}{2}\right\rceil\left(\left\lfloor\frac{k}{2}\right\rfloor+1\right).
\]
The result follows from Proposition~\ref{prop:Jt}.
\end{proof}

We conjecture that the family in Corollary~\ref{cor:quadratic-sharp} determines the minimum value 
$n_0$ for which the conclusion of Theorems~\ref{thm:cubic} and~\ref{thm:quadratic} holds for all $n \ge n_0$.

\begin{conjecture}
Let $k \ge 3$ and $\F\subseteq \binom{[n]}{k}$, and assume that
\[
n\ge k+\left\lceil\frac{k}{2}\right\rceil\left(\left\lfloor\frac{k}{2}\right\rfloor+1\right),
\]
Then
\[
\sum_{A\in \F}\frac{1}{\binom{n-i_{\F}(A)}{k-i_{\F}(A)}}\le 1.
\]
\end{conjecture}

\section{Hilton-type consequences}\label{sec:hilton}

We now prove our Hilton-type result, Theorem~\ref{thm:multi-hilton}, using the localized results Theorems~\ref{thm:cubic} and~\ref{thm:quadratic}.

\begin{proof}[Proof of Theorem~\ref{thm:multi-hilton}]
Put
\[
\mathcal{U}:=\bigcup_{i=1}^k\bigcup_{j=1}^{m_i}\F_{i,j}.
\]
For $A\in \mathcal{U}$, let
\[
r(A):=\left|\{(i,j): A\in \F_{i,j}\}\right|.
\]
Then
\[
\sum_{i=1}^k\sum_{j=1}^{m_i} |\F_{i,j}|=\sum_{A\in \mathcal{U}} r(A).
\]
So it is enough to prove that
\[
r(A)\le M_{i_{\mathcal{U}}(A)}
\qquad (A\in \mathcal{U}).
\]
Fix $A\in \mathcal{U}$ and write $s:=i_{\mathcal{U}}(A)$. If $r(A)=1$, then $r(A)=1\le M_s$.
Assume now that $r(A)\ge 2$, and let $A\in \F_{p,\ell}$. Choose a  family
$\F_{q,\ell'}$ containing $A$ with $(q,\ell') \neq (p,\ell)$.

We claim that $s\ge p$. Let $B\in \mathcal{U}$ be arbitrary. If $B\notin \F_{p,\ell}$, then
$B\in \F_{i',j'}$ for some $(i',j')\ne (p,\ell)$, and so
\[
|A\cap B|\ge \max\{p,i'\}\ge p.
\]
If instead $B\in \F_{p,\ell}$, then using $A\in \F_{q,\ell'}$ and
$(p,\ell)\ne (q,\ell')$, we obtain
\[
|A\cap B|\ge \max\{p,q\}\ge p.
\]
Thus $|A\cap B|\ge p$ for every $B\in \mathcal{U}$, and hence $s=i_{\mathcal{U}}(A)\ge p$.
Since $\F_{p,\ell}$ was an arbitrary family containing $A$, every such family has first index at most $s$.
Therefore
\[
r(A)\le \sum_{r=1}^s m_r\le M_s.
\]
Put
\[
\Lambda:=\max_{0\le s\le k} M_s\binom{n-s}{k-s}.
\]
Then for every $A\in \mathcal{U}$,
\[
M_{i_{\mathcal{U}}(A)}\le \Lambda\binom{n-i_{\mathcal{U}}(A)}{k-i_{\mathcal{U}}(A)}^{-1}.
\]
Hence
\[
\sum_{i=1}^k\sum_{j=1}^{m_i} |\F_{i,j}|
\le
\sum_{A\in \mathcal{U}} M_{i_{\mathcal{U}}(A)}
\le \Lambda\,\Phi_{n,k}(\mathcal{U}).
\]
If $n\ge k^2/4+5k+D$, then Theorem~\ref{thm:quadratic} gives $\Phi_{n,k}(\mathcal{U})\le 1$.
The cubic case is identical, using Theorem~\ref{thm:cubic}. In either case, the displayed bound yields
the stated inequality.
\end{proof}

\section{Use of automated tools} Candidate arguments were explored using ChatGPT. The final arguments, proofs, and conclusions in the manuscript were written and independently verified by the author.

\end{document}